\newtheorem{theorem}{Theorem}[section]
\newtheorem{corollary}[theorem]{Corollary}
\newtheorem{definition}[theorem]{Definition}
\newtheorem{lemma}[theorem]{Lemma}
\newtheorem{proposition}[theorem]{Proposition}
\theoremstyle{remark}
\newtheorem{remark}[theorem]{Remark}
\numberwithin{equation}{section}
\newcommand{\vanish}[1]{}\parskip=12pt
\newcommand{\wt}{v}
\newcommand{\imi}{{\mathbf i}}
\begin{document}
\title[Meixner polynomials and quantum algebras]{Meixner polynomials of the second kind and quantum algebras representing
  $su(1,1)$} 
\author[G.\ Hetyei]{G\'abor Hetyei}
\email{ghetyei@uncc.edu}
\address{Department of Mathematics and Statistics, University of
  North Carolina at Charlotte, Charlotte, NC 28223, USA}
\subjclass{Primary 81S05; Secondary 05E35, 33C45}

\keywords{Quantum algebra; $su(1,1)$; Laguerre polynomials;
  Meixner polynomials of the second kind} 

\maketitle

\begin{abstract}
We show how Viennot's combinatorial theory of orthogonal polynomials may
be used to generalize some recent results of Sukumar and Hodges on the
matrix entries in powers of certain operators in a representation of
$su(1,1)$.  
Our results link these calculations to finding the moments and inverse
polynomial coefficients of certain Laguerre polynomials and
Meixner polynomials of the second kind. As an immediate consequence 
of results by Koelink, Groenevelt and Van Der Jeugt, for the related operators,
substitutions into essentially the same Laguerre polynomials
and Meixner polynomials of the
second kind may be used to express their eigenvectors. Our
combinatorial approach explains and generalizes this ``coincidence''.
\end{abstract}

\section*{Introduction}

In two recent papers (Hodges \& Sukumar 2007; Sukumar \& Hodges 2007),
Sukumar and Hodges introduced a one-parameter family of operator algebras
exhibiting a parity-dependent structure, to study a quantum harmonic
oscillator. The operators $R$ and $L$, 
arising as the half of the square of the creation and annihilation
operators, respectively, generate a subalgebra representing
the Lie algebra $su(1,1)$. This representation is the
direct sum of two discrete series representations. 
Sukumar and Hodges observed that calculating the matrix entries in
the powers of $L+R$ lead to some interesting combinatorial
questions. In this paper we show that these results may be
generalized in such a way that the answer may be expressed in terms of
moments and inverse polynomial coefficients associated to
Laguerre polynomials and Meixner polynomials of the second
kind. 

Almost at the same time, Klimyk (2006) used discrete series representations 
of $su(1,1)$ to study another model of a quantum harmonic oscillator.  
In Klimyk's setting, the operator $L+R$
corresponds  to the momentum operator, and Klimyk (2006) noted that the
eigenfunctions of this operator may be expressed via substitutions into
{\em Meixner-Pollaczek polynomials} with the appropriate parameters. 
This observation is a consequence of the more general results of
Van Der Jeugt (1997), Koelink \& Van Der Jeugt (1998) and
Groenevelt \& Koelink (2002), describing the spectra of certain
self-adjoint elements in an arbitrary discrete series representation of
$su(1,1)$, where besides the Meixner-Pollaczek polynomials, the 
Laguerre polynomials and the Meixner polynomials of the first kind also
appear, depending on the choice of the parameters. 

In this paper we generalize and explain this remarkable ``coincidence'', 
using Viennot's (1983) combinatorial theory of orthogonal
polynomials. For that purpose we will consider tridiagonal operators $T$
defined on a subspace of $\ell^2({\mathbb Z}_{\geq 0})$ that contains at
least the subspace generated by the basis vectors $\{e_0,
e_1, e_2,\ldots\}$. 

In section~\ref{S_3d} we express 
the matrix entries $\langle e_{i+d}, T^m e_i\rangle$ in terms of
weighted Motzkin paths. Using Viennot's (1983) theory, in the case when
the matrix of $T$ has nonzero off-diagonal entries wherever this is
allowed, we are able to associate an orthogonal
polynomial sequence $\{p_n(x)\}_{n\geq 0}$ to $T$ in such a way that the
matrix entries $\langle e_{0}, T^m e_{0}\rangle$ and $\langle e_{d}, T^m
e_{0}\rangle$ may be expressed in terms of the moments and inverse polynomial
coefficients associated to $\{p_n(x)\}_{n\geq 0}$. Applications of these
general results to the discrete series representations of $su(1,1)$
follow in section~\ref{S_su11adapt}.  

In section~\ref{S_spectrum} we see that for those tridiagonal 
operators $T$ that have nonzero off-diagonal matrix entries wherever this is
allowed, the same orthogonal polynomial sequence $\{p_n(x)\}_{n\geq 0}$
as in section~\ref{S_3d} may be associated to $T$ to express its
(potential) eigenvectors. For the discrete series
representations of $su(1,1)$ we thus recover the necessity part of the
above mentioned results of Van Der Jeugt (1997), Koelink \& Van Der
Jeugt (1998) and Groenevelt \& Koelink (2002). 

The ideas presented in this paper also provide a combinatorial framework 
to discuss some representations of
other Lie (super)algebras over $\ell^2({\mathbb Z}_{\geq 0})$. 
In the concluding section~\ref{S_ex} we outline a
sample application. The other example in section~\ref{S_ex} indicates
that we do not have to limit ourself to self-adjoint operators at this 
level of investigation. It seems harder to replace the Hilbert space
$\ell^2({\mathbb Z}_{\geq 0})$ with another Hilbert space, as in that
case one would be facing the dilemma, what notion should replace the
notion of the associated orthogonal polynomial sequences. Finding such a
generalization seems an interesting challenge for future research.

\section{Preliminaries}

\subsection{Viennot's combinatorial proof of Favard's theorem}
\label{s_Vfav}

Concerning orthogonal polynomials, in this paper we follow
Viennot's (1983) notation and terminology, but sometimes we 
complement the facts stated therein with results cited from
Chihara's (1978) classical work. Viennot's work (1983) is currently out
of print but available on the author's webpage. Some
of his most important results were also communicated by Stanton (2003),
a recommended source for those readers whose French is not fluent. 

The direct way to define an {\em orthogonal polynomial sequence (OPS)}
$\{p_n(x)\}_{n\geq 0}$ is to provide a linear form $f: {\mathbb
  C}[x]\rightarrow {\mathbb C}$ and postulate the following three axioms:
\begin{itemize}
\item[(i)] for all $n$, $p_n(x)$ is a polynomial of degree $n$,
\item[(ii)] $f(p_m(x)p_n(x))=0$ if $m\neq n$,
\item[(iii)] for all $n$, $f(p_n(x)^2)\neq 0$.
\end{itemize}
The map $f$ is called a {\em moment functional}. Whenever an OPS exists,
each of its elements is determined up to a non-zero constant factor, see
the corollary of theorem 2.2 in ch.\ I of Chihara's (1978) book.   

An equivalent way define an OPS is by means of Favard's
theorem, see theorems 4.1 and 4.4 in ch.\  I of Chihara's (1978) book. 
This states 
that a sequence of monic polynomials $\{p_n(x)\}_{n\geq 0}$ is an
orthogonal polynomial sequence,
if and only if it satisfies the initial conditions
\begin{equation}
\label{E_Favinit}
p_0(x)=1, p_1(x)=x-b_0,
\end{equation}
and a two-term recurrence formula  
\begin{equation}
\label{E_Favrec}
p_{n+1}(x)=(x-b_n)p_{n}(x)-\lambda_n p_{n-1}(x) \quad\mbox{for $n\geq 1$,}
\end{equation}
where the numbers $b_n$ and $\lambda_n$ are constants and $\lambda_n\neq 0$
 for $n\geq 1$. The above form appears in theorem
 9 of ch.\ I in Viennot's (1983) book, in Chihara's (1978) work the
 indices are shifted.  
The original proof of Favard's theorem 
provides only a recursive description of the moment associated 
 functional $f$. In proposition 17 of ch.\ I, 
Viennot (1983) gave an explicit combinatorial description by expressing
 the {\em moments} $\mu_n:=f(x^n)$ as sums of weighted {\em Motzkin paths}. A
 Motzkin path of length $n$, as defined in def.\ 15 of ch.\ I by Viennot
 (1983), is a path
 $\omega=(s_0,\ldots,s_n)$ in  $\mathbb Z\times \mathbb Z$ from
 $s_0$ to $s_n$ such that the second coordinate of all $s_i$'s
 is non-negative, and each step $(s_i,s_{i+1})$ is either a northeast
 step $(1,1)$ or an east step $(1,0)$ or a southeast step $(1,-1)$.  
Viennot (1983) introduces the following valuation: the
 weight $\wt(\omega)$ of $\omega$ is the product of the weights of its
 steps $(s_{i-1},s_i)$, where each northeast step has weight $1$, each 
east step at level $k$ has weight $b_k$ and each southeast step
starting at level $k$ has weight $\lambda_k$. Here the level is the
second coordinate of the lattice point. He then defines
$\mu_n:=\sum_{\omega} \wt(\omega)$ as the total weight of all Motzkin
 paths $\omega$ from $(0,0)$ to $(n,0)$. 
\begin{theorem}[Viennot (1983)]
\label{T_Vmain}
Let $\{p_n(x)\}_{n\geq 0}$ be a sequence of monic polynomials, given by 
(\ref{E_Favinit}) and (\ref{E_Favrec}) and 
let $f: {\mathbb C}[x]\rightarrow {\mathbb C}$ be the linear map given
by $f(x^n):=\mu_n$. Then for all $n,k,\ell\geq 0$ we have
$$
f(x^n p_k(x)p_{\ell}(x))=\lambda_1\cdots \lambda_{\ell}\sum_{\omega} \wt(\omega)
$$ 
where the summation is over all Motzkin paths of length $n$ from level
$k$ to level $\ell$. 
\end{theorem}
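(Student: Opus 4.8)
The plan is to exploit the fact that, by the recurrence~(\ref{E_Favrec}), multiplication by $x$ amounts to appending a single Motzkin step. Writing (\ref{E_Favinit})--(\ref{E_Favrec}) in the form $x\,p_j(x)=p_{j+1}(x)+b_j\,p_j(x)+\lambda_j\,p_{j-1}(x)$ (with the convention $p_{-1}:=0$) says precisely that from level $j$ one may take a northeast step of weight $1$, which raises the index, an east step of weight $b_j$, or a southeast step of weight $\lambda_j$. Let $M_n(k,\ell):=\sum_{\omega}\wt(\omega)$ denote the total weight of all Motzkin paths of length $n$ from level $k$ to level $\ell$. A straightforward induction on $n$ — at each stage multiplying the identity by $x$ and regrouping, which on the combinatorial side is the decomposition of a path according to its last step — then yields the purely algebraic identity $x^n p_k(x)=\sum_{m\geq 0} M_n(k,m)\,p_m(x)$. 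Applying $f$ and using linearity reduces the theorem to the case $n=0$, i.e.\ to the statement $f\bigl(p_m(x)p_\ell(x)\bigr)=\delta_{m\ell}\,\lambda_1\cdots\lambda_\ell$, since a length-$0$ Motzkin path from level $k$ to level $\ell$ (of weight $1$) exists exactly when $k=\ell$.

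That case $n=0$ is essentially Favard's theorem, so it will carry the real work. First I would prove the auxiliary formula $f\bigl(x^j p_\ell(x)\bigr)=M_j(\ell,0)$ for all $j,\ell\geq 0$ by induction on $\ell$: for $\ell=0$ and $\ell=1$ it follows from the definition $\mu_j=f(x^j)=M_j(0,0)$ together with the first-step decomposition $M_{j+1}(0,0)=b_0 M_j(0,0)+M_j(1,0)$, and for $\ell\geq 2$ one applies the recurrence to $p_\ell$, invokes the inductive hypothesis at $\ell-1$ and $\ell-2$, and matches the result against the decomposition of a length-$(j{+}1)$ Motzkin path from level $\ell-1\ge 1$ to level $0$ according to its first step. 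Granting this, I would expand $p_m(x)=\sum_{j=0}^{m} a_j x^j$ (monic, so $a_m=1$) and compute $f\bigl(p_m(x)p_\ell(x)\bigr)=\sum_{j=0}^{m} a_j\,M_j(\ell,0)$. Since one cannot descend $\ell$ levels in fewer than $\ell$ steps, $M_j(\ell,0)=0$ for $j<\ell$, while $M_\ell(\ell,0)=\lambda_\ell\lambda_{\ell-1}\cdots\lambda_1$ is realized by the unique path consisting of $\ell$ consecutive southeast steps. Hence $f\bigl(p_m(x)p_\ell(x)\bigr)=0$ for $m<\ell$ and $=\lambda_1\cdots\lambda_\ell$ for $m=\ell$, and the remaining case $m>\ell$ drops out of the symmetry $f\bigl(p_m(x)p_\ell(x)\bigr)=f\bigl(p_\ell(x)p_m(x)\bigr)$. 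Feeding this back into $f(x^n p_k(x)p_\ell(x))=\sum_{m\geq 0} M_n(k,m)\,f\bigl(p_m(x)p_\ell(x)\bigr)$ gives $f\bigl(x^n p_k(x)p_\ell(x)\bigr)=\lambda_1\cdots\lambda_\ell\,M_n(k,\ell)$, which is the asserted formula.

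The step I expect to be the main obstacle is exactly this orthogonality case $n=0$: it is the place where the content of Favard's theorem is genuinely used, and it cannot be sidestepped by a naive induction on $n$, being itself that induction's base case. A secondary nuisance is the boundary bookkeeping — Motzkin paths may not drop below level $0$ and $p_{-1}$ is undefined — which forces one to treat levels $0$ and $1$ (equivalently the cases $k,\ell\in\{0,1\}$) separately in the inductions above, though each such verification is routine.
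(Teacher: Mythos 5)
Your argument is correct, and every step checks out: the expansion $x^n p_k(x)=\sum_{m\ge 0}M_n(k,m)\,p_m(x)$ follows from reading the three-term recurrence as a last-step decomposition of paths, the auxiliary identity $f(x^jp_\ell(x))=M_j(\ell,0)$ follows by the first-step decomposition, and the orthogonality $f(p_mp_\ell)=\delta_{m\ell}\lambda_1\cdots\lambda_\ell$ then drops out of $M_j(\ell,0)=0$ for $j<\ell$ together with $M_\ell(\ell,0)=\lambda_\ell\cdots\lambda_1$. Note that the paper itself offers no proof of this statement --- it is quoted as a theorem of Viennot (1983) --- but the route you take (recurrence as path-step transfer, then the $n=0$ orthogonality case as the crux) is essentially Viennot's own, and your handling of the boundary at level $0$ via the convention $p_{-1}=0$ is exactly what is needed.
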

Substituting $n=0$ in theorem~\ref{T_Vmain} yields the more difficult
implication of Favard's theorem. A generalization of 
theorem~\ref{T_Vmain} is theorem 1 in ch.\ III of Viennot's (1983) work, which 
allows the computation of the {\em inverse polynomials} of a system of
monic polynomials  $\{p_n(x)\}_{n\geq 0}$ given by (\ref{E_Favinit}) and
(\ref{E_Favrec}), even if some scalars $\lambda_n$ are zero (and thus
$\{p_n(x)\}_{n\geq   0}$ is not an OPS). The inverse polynomials
$q_n(x):=\sum_{i=0}^n q_{n,i} x^i$ are defined by $x^n=\sum_{i=0}^n
q_{n,i} p_i(x)$.  
\begin{theorem}[Viennot (1983)]
\label{T_Vinverse}
Let $\{p_n(x)\}_{n\geq 0}$ be a system of monic polynomials defined 
by (\ref{E_Favinit}) and (\ref{E_Favrec}) for some numbers
$\{b_n\}_{n\geq 0}$ and $\{\lambda_n\}_{n\geq 1}$. The coefficient
$q_{n,k}$ of $x^k$ in the inverse polynomial $q_n(x)$ is then the
total weight of all weighted Motzkin paths of length $n$ starting at
$(0,0)$ and ending at $(n,k)$. 
\end{theorem}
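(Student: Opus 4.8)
The plan is to prove the identity by induction on $n$, showing that the coefficients $q_{n,k}$ and the total path-weights obey one and the same recursion with the same initial data. I would note first that this approach needs no hypothesis on the $\lambda_n$: since (\ref{E_Favinit}) and (\ref{E_Favrec}) force each $p_n(x)$ to be monic of degree $n$ for any choice of the $b_n$ and $\lambda_n$, the family $\{p_n(x)\}_{n\geq 0}$ is a basis of ${\mathbb C}[x]$ and the inverse polynomials $q_n(x)$ are unambiguously defined even when some $\lambda_n$ vanishes.

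On the algebraic side, I would rewrite the recurrence (\ref{E_Favrec}) in the equivalent form
$$
x\,p_n(x) = p_{n+1}(x) + b_n\,p_n(x) + \lambda_n\,p_{n-1}(x),
$$
which holds for $n\geq 1$ and, under the convention $p_{-1}(x):=0$, also for $n=0$ because of (\ref{E_Favinit}). Multiplying the defining relation $x^n=\sum_{i=0}^{n} q_{n,i}\,p_i(x)$ by $x$, replacing each $x\,p_i(x)$ by the right-hand side above, and reading off the coefficient of $p_k(x)$ in the resulting expansion of $x^{n+1}$, I obtain
$$
q_{n+1,k} = q_{n,k-1} + b_k\,q_{n,k} + \lambda_{k+1}\,q_{n,k+1},
$$
with the conventions $q_{n,j}:=0$ for $j<0$ and for $j>n$. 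Together with $q_{0,0}=1$ and $q_{0,k}=0$ for $k\neq 0$, immediate from $x^0=p_0(x)$, this determines all the $q_{n,k}$.

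On the combinatorial side, let $W_{n,k}$ denote the total weight of all weighted Motzkin paths of length $n$ from $(0,0)$ to $(n,k)$. Classifying such a path of length $n+1$ with endpoint $(n+1,k)$ by its final step --- a northeast step from $(n,k-1)$ of weight $1$ (possible only when $k\geq 1$), an east step from $(n,k)$ of weight $b_k$, or a southeast step from $(n,k+1)$ of weight $\lambda_{k+1}$ --- and deleting that last step, one sees that
$$
W_{n+1,k} = W_{n,k-1} + b_k\,W_{n,k} + \lambda_{k+1}\,W_{n,k+1},
$$
with the same boundary conventions, while the unique length-$0$ path (the single vertex at the origin) gives $W_{0,0}=1$ and $W_{0,k}=0$ otherwise. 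Since $\{q_{n,k}\}$ and $\{W_{n,k}\}$ satisfy the same recursion and the same base case, they agree, which is the claim. The computation is routine, and I expect no genuine obstacle: the only point to watch is the bookkeeping at the two edges of the strip --- confirming that the ``missing'' quantities $q_{n,-1}$, $W_{n,-1}$ (there is no level $-1$, no polynomial $p_{-1}$) and the over-degree terms $q_{n,j}$ with $j>n$ vanish consistently on both sides --- and that the rewritten three-term recurrence is already valid at $n=0$. One could instead try to deduce the statement from Theorem~\ref{T_Vmain} via reversal of Motzkin paths, but that would require all $\lambda_n\neq 0$, and the weight of a reversed path is not simply that of the original, so the direct induction above is the cleaner route.
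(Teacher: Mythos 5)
Your proof is correct: the three-term recurrence for the $q_{n,k}$ obtained by expanding $x\cdot x^n$ in the basis $\{p_i(x)\}$ matches exactly the last-step decomposition of weighted Motzkin paths, the boundary conventions are handled consistently, and you rightly observe that no hypothesis $\lambda_n\neq 0$ is needed. The paper itself gives no proof of this statement (it is quoted from ch.\ III of Viennot's monograph), and your induction is essentially the standard transfer-matrix argument Viennot uses, so nothing further is required.
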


\subsection{Viennot's ``histoires'', Laguerre and Meixner polynomials}

Sometimes models of even deeper combinatorial interest 
may be built, using Viennot's (1983) second valuation of Motzkin paths, defined
as follows. Given the sequences of numbers $\{a_k\}_{k\geq 0}$, $\{b_k\}_{k\geq
  0}$ and $\{c_k\}_{k\geq 1}$, we define the weight
$\wt_1(\omega)$ of a Motzkin path $\omega$ as the product of the weights
$\wt_1(s_{i-1},s_i)$ of its steps, such that each northeast
(resp.\ east, resp.\ southeast) step starting at level $k$ has weight
$a_k$ (resp.\ $b_k$, resp.\ $c_k$). Setting 
\begin{equation}
\label{E_ldef}
\lambda_{k}=a_{k-1}c_k\quad\mbox{for $k\geq 1$},
\end{equation}
for a Motzkin path $\omega$ from $(0,0)$ to $(n,0)$,
$\wt_1(\omega)$ is equal to $\wt(\omega)$, as defined in
subsection~\ref{s_Vfav}, because each 
northeast step starting at some level $k-1$ may be matched to the 
first subsequent southeast step starting at level $k$. More generally,
for a Motzkin path $\omega$ starting at level $k$ and ending at level
$l$, we have 
\begin{equation}
\label{E_hist}
\wt(\omega)=
\begin{cases}
\sum_{\omega} \wt_1(\omega)&\mbox{if $k=l$,}\\
a_k a_{k+1}\cdots a_{l-1}\sum_{\omega} \wt_1(\omega)&\mbox{if $k<l$,}\\
c_{k}c_{l-1}\cdots c_{l+1}\sum_{\omega} \wt_1(\omega)&\mbox{if $k>l$,}\\
\end{cases}
\end{equation}
see lemma 1 in ch.\ II of Viennot (1983). Algebraically,
eq.\ (\ref{E_ldef}) corresponds to switch from the three term recurrence
relation for monic orthogonal polynomials to a general three term recurrence 
relation. The combinatorial
interest in $\wt_1$ arises when the sequences
$\{a_k\}_{k\geq 0}$, $\{b_k\}_{k\geq 0}$ and $\{c_k\}_{k\geq 1}$ consist
of non-negative integers, allowing us to think of these weights as
making {\em choices} 
from a set of options. In such situations Viennot (1983) defines a {\em
  story (``histoire'')} as a pair $(\omega; (p_1,\ldots,p_n))$ of a
Motzkin path $\omega=(s_0,\ldots,s_n)$ and a sequence
$(p_1,\ldots,p_n)$ of positive integers satisfying 
$1\leq p_i\leq \wt_1(s_{i-1},s_i)$ for $1\leq i\leq n$. 
Clearly the moment $\mu_n$ is the number of stories of length $n$. 
Thus, in such situations, we may replace the weighted lattice path model
with a model that involves enumerating (non-weighted) combinatorial
objects. In particular, permutation enumeration models of the (monic) Laguerre
polynomials and of the Meixner polynomials of the second kind (discussed
below) may be obtained by specializing Viennot's (1983) bijection between his
``histoires de Laguerre'' of length $n$ associated to the valuation
\begin{equation}
\label{E_Lhist}
a_k=k+1,\quad b_k=2k+2\quad\mbox{for $k\geq 0$;} \quad c_k=k+1
\quad\mbox{for $k\geq 1$,}
\end{equation}
and the permutations of the set $\{1,\ldots,n+1\}$. For the details we
refer to Viennot's (1983) work. 

The {\em (monic) Laguerre polynomials $L_n^{(\alpha)}(x)$} are
the OPS defined by (\ref{E_Favinit}) and (\ref{E_Favrec}) where
$b_n=2n+\alpha+1$ and $\lambda_n=n(n+\alpha)$, see \S
  5 in ch.\ II of Viennot's (1983) work. The associated moments are given in
  (31') of ch.\ II by Viennot's (1983):
\begin{equation}
\label{E_Lmoment}
\mu_n=(\alpha+1)_n=(\alpha+1)(\alpha+2)\cdots (\alpha+n).
\end{equation}  
The combinatorial model mentioned above is associated to the case
$\alpha=1$ yielding {\em large Laguerre stories}. For general
$\alpha$ we have {\em weighted Laguerre stories}, for $\alpha=0$ we obtain
{\em restricted Laguerre stories} by limiting $b_k'$ to $k$
above. The restricted Laguerre stories are thus a subset of the
large Laguerre stories and correspond bijectively to the
permutations $\sigma\in S_{n+1}$ with $\sigma(1)=n+1$. 

Another class of particular interest to us are the {\em Meixner
  polynomials of the first kind $m_n(x;\delta,\eta):=(c-1)^n/c^n
  \hat{m}_n(x;\beta,c)$}, whose monic variant
  $\{\hat{m}_n(x;\beta,c)\}_{n\geq 0}$ is defined by  
  (\ref{E_Favinit}) and (\ref{E_Favrec}) where 
\begin{equation}
\label{E_mx1rec}
b_n=\frac{(1+c)n+\beta c}{1-c}\quad\mbox{and}\quad
\lambda_n=\frac{cn(n+\beta-1)}{(1-c)^2}.
\end{equation} 
Viennot (1983) (see (54'') in ch.\ 2) )has a combinatorial proof of
the fact that the moments of the corresponding linear functional are given by
\begin{equation}
\label{E_mx1mom}
\mu_n(\beta,c)=(1-c)^{\beta}\sum_{k\geq 0} k^n c^k\frac{(\beta)_k}{k!}.
\end{equation}
Here $(\beta)_k=\beta(\beta+1)\cdots (\beta+k-1)$.

Finally we will be interested in the {\em (monic) Meixner
  polynomials of the second kind $M_n(x;\delta,\eta)$}, defined by
  (\ref{E_Favinit}) and (\ref{E_Favrec}) where 
\begin{equation}
\label{E_mx2rec}
b_n=(2n+\eta)\delta\quad\mbox{and}\quad
\lambda_n=(\delta^2+1)n(n+\eta-1).
\end{equation} 
A combinatorially interesting
  special case is $\delta=0$, implying $b_n=0$ and $\lambda_n=n(n+\eta-1)$. The
  moments associated to $\{M_n(x;\delta,\eta)\}_{n\geq 0}$ may be
  expressed using the Motzkin paths associated to the Laguerre
  polynomials $\{L_n^{(\eta-1)}(x)\}_{n\geq 0}$ subject to the
  restriction that {\em no east step occurs}. Motzkin paths with no east 
 steps are also known as {\em Dyck paths}. Using the
 permutation-enumeration model mentioned above, Viennot (1983) (in
 ch.\ 2, example 21) shows that  $\mu_n(0,\eta)$ is given by 
\begin{equation}
\label{E_m2n}
\mu_n(0,\eta)=\sum_{\sigma\in Z_n}\eta^{{\rm s}(\sigma)}.
\end{equation}
Here $Z_n$ is the set of {\em alternating or zig-zag} permutations of
$\{1,2,\ldots,n,\}$, starting and ending with a 
rise, and ${\rm s}(\sigma)$ is the number of {\em left-to-right minima
  (``\'el\'ements saillants'')} $\sigma(i)$, defined by 
$\sigma(i)=\min\{\sigma(1),\sigma(2),\ldots,\sigma(i)\}$. 
A generating function for the numbers $\mu_{2n}(0,\eta)$ for positive
integer $\eta$ was already computed by Carlitz (1959). He considered the
polynomials $f_n^{(k)}(x)=(-\imi)^n 
  M_n(\imi x;0,k)$ for $\imi=\sqrt{-1}$, but using the substitution
  $x\mapsto x/\imi$ we may recover the moments $\mu_{2n}(0,\eta)$. In the
  next lemma we state Carlitz' (1959) result (see his formulas (9.3) and
  (9.13)) modified for our purposes, together 
  with the outline of a combinatorial proof that extends its validity to
  all numbers $\eta$. 
\begin{lemma}
\label{L_Carlitz}
For all $\eta\neq 0$ the moments of $\{M_n(x;0,\eta)\}_{n\geq 0}$ satisfy
$$
\mu_{2n}(0,\eta)=E^{(\eta)}_{2n}\quad\mbox{where}\quad
\sec ^{\eta} (t)=\sum_{n\geq 0} E^{(\eta)}_{2n} \frac{t^{2n}}{(2n)!}.
$$
\end{lemma}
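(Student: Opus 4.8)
The plan is to match both sides of the identity against one and the same recurrence. By (\ref{E_m2n}) we have $\mu_{2n}(0,\eta)=\sum_{\sigma\in Z_{2n}}\eta^{{\rm s}(\sigma)}$, where $Z_{2n}$ is the set of up-down alternating permutations of $[2n]=\{1,\ldots,2n\}$ (being of even length, such a permutation automatically ends with a rise) and ${\rm s}(\sigma)$ is the number of its left-to-right minima. On the analytic side I would first observe that $y(t)=\sec^{\eta}(t)=(\cos t)^{-\eta}$ is the solution of $y'(t)=\eta\tan(t)\,y(t)$ with $y(0)=1$; writing $\tan t=\sum_{j\geq 1}\tau_{2j-1}t^{2j-1}/(2j-1)!$, so that $\tau_{2j-1}$ is the $j$-th \emph{tangent number} (equivalently, the number of down-up, or of up-down, alternating permutations of a $(2j-1)$-element set), and comparing Taylor coefficients, this differential equation is equivalent to
\begin{equation*}
E^{(\eta)}_{2n}=\eta\sum_{j=1}^{n}\binom{2n-1}{2j-1}\tau_{2j-1}\,E^{(\eta)}_{2n-2j}\qquad(n\geq 1),\qquad E^{(\eta)}_{0}=1.
\end{equation*}
Since $\mu_{0}(0,\eta)=1$, it then suffices to prove that the numbers $\mu_{2n}(0,\eta)$ obey the same recurrence, after which the lemma follows by induction on $n$.

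To establish this recurrence I would decompose a permutation $\sigma\in Z_{2n}$ at the position of its least entry. In an up-down permutation every left-to-right minimum occupies an odd position, so $\sigma(2p-1)=1$ for a unique $p\in\{1,\ldots,n\}$. Cutting $\sigma$ there produces a prefix $\sigma(1)\cdots\sigma(2p-2)$, the entry $1$ in position $2p-1$, and a suffix $\sigma(2p)\cdots\sigma(2n)$. I would check that, after order-preserving relabelling of the entries, the standardized prefix $\sigma'$ is an arbitrary element of $Z_{2p-2}$; the standardized suffix is an arbitrary down-up alternating permutation of the odd-size set $\{1,\ldots,2n-2p+1\}$, of which there are $\tau_{2n-2p+1}$; and the value sets of the two pieces may be prescribed freely, which amounts to choosing the $(2n-2p+1)$-element value set of the suffix inside $\{2,\ldots,2n\}$, giving $\binom{2n-1}{2n-2p+1}$ possibilities. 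The statistic ${\rm s}$ splits cleanly under the cut: every entry lying to the right of position $2p-1$ exceeds $1$ and so fails to be a left-to-right minimum of $\sigma$, while the entry $1$ always is one, whence ${\rm s}(\sigma)=1+{\rm s}(\sigma')$. Summing $\eta^{{\rm s}(\sigma)}=\eta\cdot\eta^{{\rm s}(\sigma')}$ over all $\sigma$ and over the position $p$, and reindexing by $j=n-p+1$, reproduces precisely the displayed recurrence with $\mu_{2n}(0,\eta)$ in place of $E^{(\eta)}_{2n}$.

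The only step that needs genuine attention — the main obstacle, such as it is — is verifying that this cut is a bijection onto the stated family of triples: one has to follow the alternating pattern across the two seams, at positions $(2p-2,2p-1)$ and $(2p-1,2p)$, to confirm that inserting the value $1$ between a $Z_{2p-2}$-pattern and a down-up pattern of odd length always rebuilds an up-down permutation of $[2n]$ whose least entry sits in position $2p-1$, and that the prefix, being of even length, is automatically of the required $Z$-type. This is a short parity bookkeeping rather than a real difficulty. I should also mention an alternative, less combinatorial route that avoids the decomposition altogether: for each fixed $n$, both $\mu_{2n}(0,\eta)$ and $E^{(\eta)}_{2n}$ are polynomials in $\eta$ of degree at most $n$ — the former directly from (\ref{E_m2n}), the latter because $\sec^{\eta}t=\exp(\eta\log\sec t)$ — so Carlitz's (1959) evaluation at the infinitely many positive integers $\eta=k$ already forces the identity for all $\eta$ by interpolation.
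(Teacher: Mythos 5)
Your proof is correct and follows essentially the same route as the paper: the same cut of a zig-zag permutation at the entry $1$ (its rightmost left-to-right minimum), with the standardized suffix counted by tangent numbers and the prefix again of type $Z$. The only cosmetic difference is that the paper refines by the number $k$ of left-to-right minima and solves $\phi_k(t)=(-\ln\cos t)^k/k!$ before summing against $\eta^k$, whereas you keep the $\eta$-weighted sum intact and match its recurrence directly against the differential equation $y'=\eta\tan(t)\,y$ satisfied by $y=\sec^{\eta}(t)$.
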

\begin{proof}
Let $E_{2n,k}$ denote the number of those zig-zag permutations on $2n$
elements that start and end with a rise and have $k$ left-to-right minima. 
Since $1$ is the rightmost left-to-right minimum,
using the decomposition $\sigma(1)\cdots\sigma(2n)=\sigma(1)\cdots
1\cdots \sigma(2n)$ we have the recursion formula
\begin{equation}
\label{E_prec}
E_{2n,k}=\sum_{m=0}^{n-1} \binom{2n-1}{2m} E_{2m,k-1}\cdot T_{2n-2m-1}.
\end{equation}
Here $T_{2n-2m-1}$ counts the number of zig-zag
permutations on $2n-2m-1$ elements, starting with a descent and ending
with a rise. Introducing 
$\phi_k(t):=\sum_{n\geq 0} E_{2n,k} t^{2n}/(2n)!$
we obtain the recursion 
$$
\phi_k(t)=\int_0^t \phi_{k-1}(u) \tan(u)\ du. 
$$
Using this, we may show by induction that
$\phi_k(t)=(-\ln\cos(t))^k/k!$. Thus, by (\ref{E_m2n}),
$\mu_{2n}(0,\eta)$ is the coefficient of $t^{2n}/(2n)!$ in
$$
\sum_{k\geq 0} (-1)^k \ln\cos(t)^k\cdot \eta^k/k!
=\exp(-\eta\ln\cos(t))=\sec(t)^{\eta}.$$ 
\end{proof}

A variant of the Meixner polynomials of the second kind are the {\em
  Meixner-Pollaczek polynomials $\{P_n^{(\lambda)}(x;\phi)\}_{n\geq
  0}$}, defined by  (1.7.1) in the Askey-Wilson scheme by
  Koekoek \& Swarttouw (1998). We should note for future reference that 
the two OPS are essentially the same. The formula connecting them is
  (3.22) in ch.\ VI of Chihara's (1978) work:  
\begin{equation}
\label{E_MPdef}
P_n^{\lambda}(x;\phi)=\frac{\sin^n \phi}{n!}
M_n(2x;\delta,2\lambda),\quad \delta=\cot \phi,\quad 0<\phi<\pi.
\end{equation}
Chihara (1978) does not use the adjective ``Meixner-Pollaczek'', 
but gives the same recurrence for them in 
(5.13) of his ch.\ VI as the recurrence (1.7.3) of Koekoek \& Swarttouw (1998). Setting $\phi=\pi/2$ in
~(\ref{E_MPdef}) leads to $\delta=0$. According to (1.7.4) of Koekoek \&
Swarttouw (1998), the normalized version of $P_n^{\lambda}(x;\phi)$ is
$n!/(2\sin \phi)^n P_n^{\lambda}(x,\phi)$. As a consequence of eq.\
(\ref{E_MPdef}) we obtain the following. 
\begin{corollary}
\label{C_MPn}
For $0<\phi<1$, the normalized version of the Meixner-Pollaczek polynomials
$\{P_n^{\lambda}(x;\phi)\}_{n\geq 0}$ is
$\{2^{-n} M_n(2x,\cot \phi,2\lambda)\}_{n\geq 0}$. 
\end{corollary}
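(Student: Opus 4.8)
The plan is to simply combine the two facts already recorded just before the statement: the connection formula~(\ref{E_MPdef}) between the Meixner--Pollaczek polynomials and the monic Meixner polynomials of the second kind, and the normalization prescription quoted from (1.7.4) of Koekoek \& Swarttouw (1998), which states that the normalized version of $P_n^{\lambda}(x;\phi)$ is obtained by multiplying it by $n!/(2\sin\phi)^n$.

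First I would write down the normalized polynomial as
$$
\widetilde{P}_n^{\lambda}(x;\phi)=\frac{n!}{(2\sin\phi)^n}\,P_n^{\lambda}(x;\phi).
$$
Next I would substitute the right-hand side of~(\ref{E_MPdef}), namely $P_n^{\lambda}(x;\phi)=(\sin^n\phi/n!)\,M_n(2x;\cot\phi,2\lambda)$, valid for $0<\phi<\pi$. The factor $n!$ cancels against $1/n!$, and the factor $\sin^n\phi$ cancels against $\sin^n\phi$ in the denominator, leaving exactly $2^{-n}M_n(2x;\cot\phi,2\lambda)$. Since~(\ref{E_MPdef}) already imposes $\delta=\cot\phi$, no further constraint is needed beyond the range of $\phi$.

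The computation is entirely routine; the only point requiring care is a consistency check, namely that the two normalization conventions being invoked (Chihara's normalization implicit in~(\ref{E_MPdef}) and the Koekoek--Swarttouw normalization (1.7.4)) refer to the same sequence $\{P_n^{\lambda}(x;\phi)\}_{n\geq 0}$. This is guaranteed by the remark preceding the statement, where it is noted that Chihara's recurrence (5.13) of his ch.~VI coincides with the recurrence (1.7.3) of Koekoek \& Swarttouw (1998); hence there is no hidden scaling discrepancy, and the cancellation above is legitimate. I do not anticipate any genuine obstacle here beyond this bookkeeping.
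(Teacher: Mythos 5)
Your proposal is correct and follows exactly the route the paper intends: the corollary is an immediate consequence of substituting~(\ref{E_MPdef}) into the normalization $n!/(2\sin\phi)^n\,P_n^{\lambda}(x;\phi)$ quoted from (1.7.4) of Koekoek \& Swarttouw, with the cancellation leaving $2^{-n}M_n(2x;\cot\phi,2\lambda)$. Your extra consistency check that Chihara's recurrence (5.13) matches (1.7.3) of Koekoek \& Swarttouw is precisely the bookkeeping the paper relies on, so there is nothing missing.
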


Finally we review the inverse polynomials of
    $\{L_n^{(\alpha)}(x)\}_{n\geq 0}$, $\{\hat{m}_n(x;\beta,c)\}_{n\geq
    0}$ and $\{M_n(x;\delta,\eta)\}_{n\geq
    0}$. All three OPS are examples of {\em Sheffer orthogonal polynomials}
    defined as polynomials $\{p_n(x)\}_{n\geq 0}$, given by a generating function
\begin{equation}
\label{E_Sdef}
\sum_{n\geq 0} p_n(x) \frac{t^n}{n!}=f(t)\exp(x g(t)).
\end{equation}
As noted in (11) and (12) of ch.\ III in Viennot's (1983) work, the inverse
polynomials $\{q_n(x)\}_{n\geq 0}$ of a Sheffer OPS $\{p_n(x)\}_{n\geq
  0}$ form a Sheffer OPS, given by
\begin{equation}
\label{E_Sinv}
\sum_{n\geq 0} q_n(x) \frac{t^n}{n!}
=\frac{1}{f(g^{\langle -1\rangle }(t))}\exp(x g^{\langle -1\rangle }(t)).
\end{equation}
Here $g^{\langle -1\rangle }(t)$ stands for the compositional inverse of
$g(t)$. 

\subsection{Discrete series representations of $su(1,1)$ and two applications}
\label{SS_su11ur}

In this subsection we review the discrete series representations of
the Lie algebra $su(1,1)$, together with two applications of them
in models of a quantum oscillator. Our notation will be close to the one
used by Groenevelt \& Koelink (2002). 

The Lie algebra $su(1,1)$ has the generators $H,B,C$, subject to the
commutation relations $[H,B]=2B$, $[H,C]=-2C$ and $[B,C]=H$. According
to section 6.4 in the work of Vilenkin and Klimyk (1991), there are four
classes of irreducible unitary representations of $su(1,1)$: the positive
and negative discrete series representations, the principal series
representations and the complementary series representations. In this
paper we will focus on the two discrete series representations, defined
on the representation space  
$\ell^2({\mathbb Z}_{\geq 0})$, whose orthonormal basis vectors we will 
denote by $\{e_n\}_{n\geq 0}$. The positive discrete series
representations $\pi^+_k$ are labeled by $k>0$. The action is given by 
\begin{equation}
\label{E_psr}
\begin{array}{rcl}
\pi^+_k(H)e_n&=& 2(k+n)e_n,\\ 
\pi^+_k(B)e_n&=&\sqrt{(n+1)(2k+n)}e_{n+1} \quad\mbox{and}\quad\\
\pi^+_k(C)e_n&=&-\sqrt{n(2k+n-1)}e_{n-1}, 
\end{array}
\end{equation}
see eq.\ (2.2) of Groenevelt \& Koelink (2002). 
The negative discrete series
representations $\pi^-_k$ are labeled by $k>0$. The action is given by 
\begin{equation}
\label{E_psn}
\begin{array}{rcl}
\pi^-_k(H)e_n&=& -2(k+n)e_n,\\ 
\pi^-_k(B)e_n&=& -\sqrt{n(2k+n-1)}e_{n-1} \quad\mbox{and}\quad\\
\pi^-_k(C)e_n&=& \sqrt{(n+1)(2k+n)}e_{n+1}, 
\end{array}
\end{equation}
see eq.\ (2.3) of Groenevelt \& Koelink (2002). The principal
series and complementary series representations are defined
on $\ell^2({\mathbb Z})$, a different Hilbert space. 
Koelink \& Van Der Jeugt (1998) (proposition 3.1, rephrased) 
have expressed the eigenvectors 
of $\pi^+_k(c\cdot H+B-C)$ for an arbitrary $c\in {\mathbb R}$
satisfying $|c|< 1$ in terms of the {\em orthonormal
Meixner-Pollaczek polynomials $p_n^{(\lambda)}(x;\phi)$}, given 
by 
$$p_n^{(\lambda)}(x;\phi)
=\sqrt{\frac{n!}{\Gamma(n+2\lambda)}}P_n^{(\lambda)}(x;\phi)$$
where the polynomials $P_n^{(\lambda)}(x;\phi)$ are the Meixner-Pollaczek
  polynomials.
\begin{theorem}[Koelink \& Van Der Jeugt]
\label{T_KVJ}
Introducing $X_\phi=-\cos \phi H+B-C$, the vector 
$$v(x):=\sum_{n\geq 0} p_n^{(k)} (x;\phi) e_n$$
is a generalized eigenvector for $\pi^+_k(X_\phi)$, with eigenvalue 
$2x\sin\phi$. 
\end{theorem}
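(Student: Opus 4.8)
The plan is to reduce the statement to checking a single three--term recurrence, namely that $\{p_n^{(k)}(x;\phi)\}_{n\ge 0}$ is the system of orthonormal polynomials attached to the Jacobi matrix of $T:=\pi^+_k(X_\phi)$. First I would write $T$ out explicitly. Since $X_\phi=-\cos\phi\,H+B-C$, the formulas (\ref{E_psr}) give
$$T e_n=-2(k+n)\cos\phi\;e_n+\sqrt{(n+1)(2k+n)}\;e_{n+1}+\sqrt{n(2k+n-1)}\;e_{n-1},$$
so on the span of $\{e_n\}$ the operator $T$ is symmetric and tridiagonal, with diagonal entries $-2(k+n)\cos\phi$ and off-diagonal entries $\sqrt{(n+1)(2k+n)}$, the latter being strictly positive for every $n\ge 1$ since $k>0$. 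Expanding $Tv(x)=\sum_n p_n^{(k)}(x;\phi)\,Te_n$ and reading off the coefficient of $e_n$, the asserted identity $Tv(x)=2x\sin\phi\cdot v(x)$ is equivalent to: for all $n\ge 0$ (with the convention $p_{-1}^{(k)}\equiv 0$),
$$\sqrt{n(2k+n-1)}\,p_{n-1}^{(k)}(x;\phi)-2(k+n)\cos\phi\,p_n^{(k)}(x;\phi)+\sqrt{(n+1)(2k+n)}\,p_{n+1}^{(k)}(x;\phi)=2x\sin\phi\,p_n^{(k)}(x;\phi).$$

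To verify this I would start from the known recurrence for the Askey--scheme Meixner--Pollaczek polynomials $P_n^{(\lambda)}(x;\phi)$ --- equation (1.7.3) of Koekoek \& Swarttouw (1998), equivalently (5.13) in ch.\ VI of Chihara (1978) --- which reads
$$(n+1)P_{n+1}^{(\lambda)}(x;\phi)=2\bigl(x\sin\phi+(n+\lambda)\cos\phi\bigr)P_n^{(\lambda)}(x;\phi)-(n+2\lambda-1)P_{n-1}^{(\lambda)}(x;\phi)\qquad(n\ge 0),$$
valid with $P_{-1}^{(\lambda)}\equiv 0$, $P_0^{(\lambda)}=1$, and substitute $P_n^{(\lambda)}(x;\phi)=\sqrt{\Gamma(n+2\lambda)/n!}\;p_n^{(\lambda)}(x;\phi)$. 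Using $\Gamma(n+1+2\lambda)/\Gamma(n+2\lambda)=n+2\lambda$ (and the analogous identity lowering $n$ by one), every Gamma--ratio collapses to a square root; dividing through by $\sqrt{\Gamma(n+2\lambda)/n!}$ produces precisely the displayed symmetric recurrence with $\lambda=k$, its $n=0$ instance playing the role of the ``initial'' relation. As a cross-check one can instead route the bookkeeping through Corollary~\ref{C_MPn}: after the affine substitution $y=2x\sin\phi$ the monic OPS associated to $T$ has the Favard coefficients of $2^{-n}M_n(2x;\cot\phi,2k)$, the monic normalization of $P_n^{(k)}(x;\phi)$, and passing to the orthonormal form gives the same recurrence.

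The computations here are routine; the genuine point is interpretive. The vector $v(x)$ is in general not an element of $\ell^2(\mathbb Z_{\ge 0})$ --- which is exactly why one speaks of a \emph{generalized} eigenvector --- so $Tv(x)=2x\sin\phi\cdot v(x)$ is to be read coefficientwise (equivalently, in the rigged-Hilbert-space/distributional sense of Koelink \& Van Der Jeugt (1998)), and the recurrence verification above is precisely what establishes it. I expect the only real pitfall to be the bookkeeping of normalizations: three different scalings are in play --- the Askey-scheme $P_n^{(\lambda)}$, the monic $2^{-n}M_n(2x;\cot\phi,2k)$, and the orthonormal $p_n^{(\lambda)}$ --- and they must be reconciled consistently with the normalization of the basis $\{e_n\}$ fixed in (\ref{E_psr}); a stray sign or factor of two anywhere would spoil the match with the off-diagonal entries $\sqrt{(n+1)(2k+n)}$.
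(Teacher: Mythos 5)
Your verification is correct: the operator $\pi^+_k(X_\phi)$ acts tridiagonally with the entries you list, reading off the coefficient of $e_n$ in $Tv(x)=2x\sin\phi\,v(x)$ gives exactly your symmetric recurrence, and substituting $P_n^{(\lambda)}=\sqrt{\Gamma(n+2\lambda)/n!}\,p_n^{(\lambda)}$ into (1.7.3) of Koekoek \& Swarttouw does collapse the coefficients to $\sqrt{(n+1)(n+2k)}$ and $\sqrt{n(n+2k-1)}$ as claimed. Note, however, that the paper does not prove this theorem at all: it is imported verbatim from Koelink \& Van Der Jeugt (1998, proposition 3.1), and the paper's own contribution is the general Theorem~\ref{T_spectrum}, which shows that \emph{any} eigenvector of a tridiagonal operator with $l_nu_n\neq 0$ must have the form $\sum_n \bigl(p_n(z)/\prod_{i<n}u_i\bigr)e_n$ for the monic OPS with $b_n=d_n$, $\lambda_n=u_{n-1}l_{n-1}$; combined with Proposition~\ref{P_lmx} and Corollary~\ref{C_MPn} this recovers the necessity direction of Theorem~\ref{T_KVJ}. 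Your argument is the same idea run in the opposite direction (start from the known Meixner--Pollaczek recurrence and check the coefficientwise eigenvalue equation), which is more self-contained for this one statement but less general; the paper's route buys a statement valid for arbitrary closable tridiagonal operators and also pins down uniqueness of the eigenvector up to scalar. Your closing remark about the $\ell^2$ issue is the right caveat: the coefficientwise identity is all that a formal recurrence check can deliver, and the genuinely analytic content of ``generalized eigenvector'' (completeness, the spectral measure) is precisely what neither your computation nor Theorem~\ref{T_spectrum} addresses and what the cited reference supplies.
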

An analogous result for $c=1$ was worked out by Van Der Jeugt (1997),
leading to Laguerre polynomials,  who also mentions (in his section
VIII) that the case $|c|>1$ leads to Meixner polynomials of the first
kind. The extension of Theorem~\ref{T_KVJ} to negative
discrete series representations is stated in proposition 3.1 of
Groenevelt \& Koelink (2002). An excellent source treating all cases
simultaneously is ch.\ 3 of Groenevelt's thesis (2004).

Klimyk (2006) used the positive series representations of $su(1,1)$ 
to study a model of a quantum oscillator. He defines $su(1,1)$
as the Lie algebra generated by 
$J_0$, $J_+$, $J_-$,  subject to the commutation relations 
$[J_0,J_+]=J_+$, $[J_0,J_-]=-J_-$ and $[J_-,J_+]=2J_0$.  
This definition is equivalent to the above one, via
setting $H:=2J_0$, $B:=J_+$ and $C:=-J_-$. Klimyk (2006) realizes the
orthonormal basis vectors  
$\{e_n\}_{n\geq 0}$ as polynomials 
$e_n^{k}(y):=\left(\frac{(2k+n-1)!}{n!}\right)^{1/2}y^n$ 
in the single variable $y$, and the Hilbert space $\ell^2({\mathbb
  Z}_{\geq 0})$ is realized as the closure of the space
of all polynomials in a single variable $y$, whereas the generators 
$J_0$, $J_+$, $J_-$ of $su(1,1)$ become differential operators. 
One of Klimyk's (2006) observations (eq.\ (17)) is that the
eigenfunctions of the momentum operator 
$J_1:=\frac{1}{2}(J_++J_-)$ are of the form
\begin{equation}
\label{E_Klimyk}
\psi_p(y)=\sum_{n\geq 0} P_n^{(k)}(p;\pi/2)y^n.
\end{equation} 
with the Meixner-Pollaczek polynomials $P_n^{(k)}(p;\pi/2)$. Obviously,
(\ref{E_Klimyk}) may be obtained from Theorem~\ref{T_KVJ} by
substituting $\phi=\pi/2$. By eq.\ (1.7.11) of   
Koekoek \& Swarttouw (1998), eq.\ (\ref{E_Klimyk}) is equivalent to  
$\psi_p(y)=(1-\imi y)^{-k+\imi\cdot
  p}(1+\imi y)^{-k-\imi\cdot p}$. 

Sukumar \& Hodges (2007) considered another model of a quantum
oscillator, using (besides others) the operators $L$, $R$, $S$, acting
on $\ell^2({\mathbb Z}_{\geq 0})$ as follows:
\begin{equation}
\label{E_SH}
\begin{array}{rcl}
Le_{2n}&=&\sqrt{(2n+1+\alpha)(2n+2))}/2 \cdot e_{2n+2}\\
Le_{2n-1}&=&\sqrt{(2n+1+\alpha)(2n))}/2 \cdot e_{2n+1}\\
Re_{2n+2}&=&\sqrt{(2n+1+\alpha)(2n+2))}/2 \cdot e_{2n}\\
Re_{2n+1}&=&\sqrt{(2n+1+\alpha)(2n))}/2 \cdot e_{2n-1}\\
Se_n&=&(2n+1+\alpha)/2 \cdot e_n\\
\end{array}
\end{equation}
The parameter $\alpha\in {\mathbb R}$ is assumed to satisfy $|\alpha|\leq 1$. 
As noted by Sukumar \& Hodges (2007) in eq. (2.1), the operators $L,R,S$
obey the same commutation rules as the generators $B,-C,H$ above, thus we
obtain a representation $\pi$ of $su(1,1)$ by setting 
\begin{equation}
\label{E_pidef}
\pi(B):=L, \quad \pi(C):=-R, \quad \pi(H):=S.
\end{equation} 
This representation may be
written as a direct sum of two representations by considering the
restrictions of the operators $L,R,S$ to the closure of the subspaces
generated by $\{e_{2n}\}_{n\geq 0}$ and $\{e_{2n+1}\}_{n\geq 0}$,
respectively. Introducing $\pi_{\beta}$ to denote the representation 
of $su(1,1)$ given by  
\begin{equation}
\label{E_SHbeta}
\begin{array}{rcl}
\pi_{\beta}(B)e_n&=&\sqrt{(n+1)(n+\beta)} e_{n+1},\\
\pi_{\beta}(C) e_n&=& -\sqrt{n(n-1+\beta)}  e_{n-1},\\
\pi_{\beta}(H) e_n&=& 2(n+\beta/2) e_n,\\
\end{array}
\end{equation}
it is not difficult to see that first restriction is equivalent to 
$\pi_{(\alpha+1)/2}$, whereas the second restriction is equivalent to 
$\pi_{(\alpha+3)/2}$. Comparing (\ref{E_SHbeta}) with (\ref{E_psr}) we
obtain 
$\pi_{\beta}=\pi^+_{\beta/2}$ for $\beta>0$.  
\begin{corollary}
\label{C_SHdec}
For $\alpha\in (-1,1]$,  the representation $\pi$ of $su(1,1)$ given by
\eqref{E_pidef} is isomorphic to the direct sum of the positive 
discrete discrete series representations $\pi^+_{(\alpha+1)/4}$ and
$\pi^+_{(\alpha+3)/4}$.
\end{corollary}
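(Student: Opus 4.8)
The plan is to read off the decomposition directly from formulas \eqref{E_SH} and \eqref{E_SHbeta}. First I would check that the operators $L,R,S$ leave invariant the two closed subspaces $\mathcal H_0:=\overline{\operatorname{span}}\{e_{2n}\}_{n\geq 0}$ and $\mathcal H_1:=\overline{\operatorname{span}}\{e_{2n+1}\}_{n\geq 0}$: this is immediate since $L$ and $R$ shift the index by $2$ and $S$ is diagonal, so $\pi=\pi|_{\mathcal H_0}\oplus\pi|_{\mathcal H_1}$ as asserted in the text. Then, on $\mathcal H_0$, I would relabel $f_n:=e_{2n}$ and rewrite the scalars in \eqref{E_SH} by factoring $2n+2=2(n+1)$ out of the radicals and absorbing the explicit $1/2$; for instance $\tfrac12\sqrt{(2n+1+\alpha)(2n+2)}=\sqrt{(n+1)\bigl(n+\tfrac{\alpha+1}{2}\bigr)}$ and $\tfrac12\sqrt{(2n+1+\alpha)(2n)}=\sqrt{n\bigl(n-1+\tfrac{\alpha+1}{2}\bigr)}$, while $S f_n=\tfrac{4n+1+\alpha}{2}f_n=\bigl(2n+\tfrac{\alpha+1}{2}\bigr)f_n$. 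Comparing with \eqref{E_SHbeta} this shows $\pi|_{\mathcal H_0}$ is (unitarily) equivalent to $\pi_{(\alpha+1)/2}$. The same computation on $\mathcal H_1$ with $g_n:=e_{2n+1}$ produces the shift $\beta=(\alpha+3)/2$ instead, recovering the two equivalences claimed just before the corollary.

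Having identified the two summands with $\pi_{(\alpha+1)/2}$ and $\pi_{(\alpha+3)/2}$, I would invoke the identity $\pi_\beta=\pi^+_{\beta/2}$ valid for $\beta>0$, obtained by matching \eqref{E_SHbeta} against \eqref{E_psr}. For the first summand this requires $(\alpha+1)/2>0$, i.e.\ $\alpha>-1$ — which is precisely why the hypothesis is $\alpha\in(-1,1]$ rather than $\alpha\in[-1,1]$ — and gives $\pi|_{\mathcal H_0}\cong\pi^+_{(\alpha+1)/4}$. For the second summand $(\alpha+3)/2>0$ holds throughout the range, so $\pi|_{\mathcal H_1}\cong\pi^+_{(\alpha+3)/4}$. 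Assembling the two pieces yields the assertion.

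I do not anticipate a genuine obstacle: the whole argument is the bookkeeping of index shifts together with the extraction of a factor $\sqrt2$ from the square roots in \eqref{E_SH}. The only places warranting a moment's care are the boundary cases: for $g_n:=e_{2n+1}$ the coefficient appearing in $Re_1$ (the $n=0$ instance) and for $f_n:=e_{2n}$ the coefficient in $Re_0$ contain a factor that vanishes, so the fictitious basis vectors $e_{-1},e_{-2}$ never actually occur and the match with the $n=0$ instances of \eqref{E_SHbeta} is exact.
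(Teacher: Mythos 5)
Your argument is correct and is essentially the paper's own (the corollary is presented there as an immediate consequence of the identifications $\pi|_{\mathcal H_0}\cong\pi_{(\alpha+1)/2}$, $\pi|_{\mathcal H_1}\cong\pi_{(\alpha+3)/2}$ and $\pi_\beta=\pi^+_{\beta/2}$ for $\beta>0$, which is exactly what you verify, including the reason the endpoint $\alpha=-1$ is excluded). One small slip: your second sample identity should read $\tfrac12\sqrt{(2n+1+\alpha)(2n)}=\sqrt{n\bigl(n-1+\tfrac{\alpha+3}{2}\bigr)}$ rather than $\sqrt{n\bigl(n-1+\tfrac{\alpha+1}{2}\bigr)}$ --- that coefficient occurs only on the odd subspace, where $\beta=(\alpha+3)/2$, and as written the equation is false, although the conclusions you then state for both subspaces are the correct ones.
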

\begin{remark}\label{rem:nops}
For $\beta=0$, we get $\pi_{0}(B)e_0=\pi_{0}(C)e_0=\pi_{0}(H)e_0=0$,
thus we may  decompose $\pi_0$ as a direct sum of zero acting on the
linear span of $e_0$ and of the restriction of $\pi_0$ acting on the 
orthogonal complement $e_0^\perp$ of $e_0$. By shifting basis vector
indices in $e_0^\perp$ we may show that the restriction of
$\pi_0$ to $e_0^\perp$ is equivalent to $\pi_2=\pi^+_1$.
\end{remark}
It should be emphasized that corollary~\ref{C_SHdec} is applicable only to
the representation of $su(1,1)$, induced by the operators $L,R,S$. In
the work of Sukumar \& Hodges (2007) these operators are expressed in
terms of the annihilation operator $A$ and its adjoint
$A^{\dagger}$. The algebra generated by $A$ and $A^{\dagger}$ is a
representation of a Lie superalgebra properly containing $su(1,1)$. In this
paper we focus on the 
combinatorial statements related to the operators $L,R,S$ and
generalizations of these. In particular, Hodges \& Sukumar (2007) show 
(see eq.\ (5.2)) that for $\alpha=1$ we have
\begin{equation}
\label{E_ET}
\langle e_{0} , (L+R)^{2m} e_{0}\rangle =E_{2m}\quad\mbox{and}\quad 
\langle e_{1} , (L+R)^{2m}e_{1}\rangle =T_{2m+1}\quad\mbox{for all
  $m\geq 0$}. 
\end{equation}
Here the numbers $\{E_{2m}\}_{m\geq 0}$ resp.\
$\{T_{2m+1}\}_{m\geq 0}$ are the {\em secant} resp.\ {\em tangent}
numbers, given by 
$$
\sec(x)=\sum_{m\geq 0} \frac{E_{2m}}{(2m)!} x^{2m}
\quad\mbox{and}\quad
\tan(x)=\sum_{m\geq 0} \frac{T_{2m+1}}{(2m+1)!} x^{2m+1}.
$$
At the light of corollary~\ref{C_SHdec}, eq.\ \eqref{E_ET} is equivalent
to 
\begin{equation}
\label{E_ETe}
\langle e_0, \pi^+_{1/2}(B-C)^m e_0\rangle=E_{2m}
\quad\mbox{and}\quad 
\langle e_0, \pi^+_{1}(B-C)^m e_0\rangle=T_{2m+1}.
\end{equation}

\section{Matrix entries of tridiagonal operators on $\ell^2({\mathbb
    Z}_{\geq 0})$} 
\label{S_3d}

\begin{definition}
\label{D_3diag}
Consider the complex Hilbert space $\ell^2({\mathbb Z}_{\geq 0})$
with orthonormal basis vectors $e_{0}$, $e_{1}$, $e_{2}$, \ldots . 
We call a linear operator $T: D(T)\subset \ell^2({\mathbb Z}_{\geq
  0})\rightarrow \ell^2({\mathbb Z}_{\geq 0})$, defined on at least the
linear span of 
$\{e_{0}, e_{1}, e_{2}, \ldots,\}$, a {\em tridiagonal operator}, if
there exists real numbers $\{l_n\}_{n\geq 0}$, $\{d_n\}_{n\geq 0}$ and
$\{u_n\}_{n\geq 0}$ such that $Te_0=d_0e_0+l_0e_1$ and, for each 
$n\geq 1$, we have $Te_n=u_n e_{n-1}+d_n e_n+l_n e_{n+1}$.
\end{definition}
Equivalently, the restriction of $T$ to the linear
span of $\{e_{0}, e_{1}, e_{2}, \ldots,\}$ may be represented by a
tridiagonal matrix  
$$
\left(
\begin{array}{ccccccc}
d_0 & u_0 & 0 & 0 & 0 & 0& \ldots\\
l_0 & d_1 & u_1 & 0 & 0 & 0 & \ldots\\
  0 & l_1 & d_2 & u_2 & 0 & 0 & \ldots\\
  0 & 0  & l_2 & d_3& u_3 & 0 & \ldots\\
\vdots & & \ddots & \ddots & \ddots & \ddots\\
\end{array} 
\right)
$$
such that all entries are real numbers. 

The letters $\{l_n\}_{n\geq 0}$, $\{u_n\}_{n\geq 0}$ and $\{d_n\}_{n\geq
  0}$ should remind the reader of the words ``lower'', ``upper'', and
``diagonal'', as it is customary in numerical analysis. We define 
the {\em $LDU$-decomposition} of the tridiagonal operator $T$ as 
the sum of operators $T=L+D+U$, where the operators $L$, $D$, $U$
are given by $U e_0=0$ and
\begin{equation}
\label{E_LDU}
Le_n= l_n e_{n+1}, \quad  Ue_{n+1}= u_n e_n\quad
De_n= d_n e_n\quad \mbox{for $n\geq 0$}. 
\end{equation}
\begin{remark}
A it is a fortunate coincidence that the operator $L$ used by Sukumar \&
Hodges (2007) corresponds to the operator $L$ in this setting, modulo
corollary~\ref{C_SHdec}. The choice  
of $D$ will depend on what we want to compute: to recover
the formulas regarding the matrix entries of $L+R$ and $L+R+S$ in the
work of Sukumar \& Hodges (2007) and Hodges \& Sukumar (2007) we would
need to equate $D$ to zero or to the appropriate restriction of $S$.
\end{remark} 
The following lemma may be considered as a generalization of equations
(4.2) and (4.3) in Hodges \& Sukumar (2007). A reader making this
comparison should note that, for technical reasons, we read the words $X$
consisting of the letters $L,D,U$ {\em right-to-left}. 
\begin{lemma}
\label{L_Motzkin}
Let $X$ be a word of length $m$ consisting of the letters $L$, $D$, $U$,
where $T=L+D+U$ is the $LDU$-decomposition of a tridiagonal operator 
$T: D(T)\subset\ell^2({\mathbb Z}_{\geq 0})\rightarrow \ell^2({\mathbb
  Z}_{\geq 0})$  
and let $i\geq 0$ and $d\geq -i$ be integers. 
Associate to the pair $(X,i)$ a lattice path
starting at $(0,i)$ such that each $L$ is a
southeast step, each $U$ is a northeast step, each $D$ is an east step,
and we read the letters in $X$ right-to-left. Then $\langle e_{i+d}, X
e_i\rangle$ is not zero only if the lattice path associated to
$(X,i)$ is a Motzkin path of length $m$ from $(0,i)$ to
$(m,i+d)$. 
\end{lemma}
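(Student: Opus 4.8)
The plan is to argue by induction on the length $m$ of the word $X$, the engine being the observation that each of $L$, $D$, $U$ sends a basis vector to a scalar multiple of a \emph{single} basis vector, with exactly one exception: $Le_n=l_ne_{n+1}$, $De_n=d_ne_n$ and (for $n\ge 1$) $Ue_n=u_{n-1}e_{n-1}$, while $Ue_0=0$. Hence for any word $X$ of length $m$ the vector $Xe_i$ is either $0$ or a scalar multiple of a single basis vector $e_j$, and the whole content of the lemma is that this index $j$, and whether we reach it at all, is governed by the one lattice path associated to $(X,i)$. For $m=0$ the word is empty, $Xe_i=e_i$, so $\langle e_{i+d},Xe_i\rangle$ vanishes unless $d=0$, and the path associated to $(X,i)$ is the length-$0$ path at $(0,i)$, trivially a Motzkin path from $(0,i)$ to $(0,i)$.

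For the inductive step I would write $X=X'y$ with $y\in\{L,D,U\}$ the rightmost letter, so that $X'$ has length $m-1$; because the word is read right-to-left, $y$ is the letter acting first on $e_i$, and it contributes the first step of the path associated to $(X,i)$, the remaining $m-1$ steps being precisely the path associated to $(X',i')$ for the appropriate new starting index $i'$. If $y=D$ then $Xe_i=d_iX'e_i$, so a nonzero matrix entry forces $\langle e_{i+d},X'e_i\rangle\ne 0$; the induction hypothesis gives that the path of $(X',i)$ is a Motzkin path from $(0,i)$ to $(m-1,i+d)$, and prepending the step assigned to $y$ (which keeps the height at $i$) yields the path of $(X,i)$, a Motzkin path from $(0,i)$ to $(m,i+d)$. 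If $y=L$ then $Xe_i=l_iX'e_{i+1}$ and a nonzero entry forces $\langle e_{(i+1)+(d-1)},X'e_{i+1}\rangle\ne 0$; applying the induction hypothesis to $(X',i+1)$ with $d-1$ in place of $d$ (note $(i+1)+(d-1)=i+d$, and $d-1\ge-(i+1)$ follows from the hypothesis $d\ge -i$) and prepending the step assigned to $y$, which moves from $(0,i)$ to the starting vertex of the shifted tail path, again produces the desired Motzkin path. The case $y=U$ is entirely analogous, using $Xe_i=u_{i-1}X'e_{i-1}$ when $i\ge 1$; and when $i=0$ we have $Ue_0=0$, so $\langle e_{i+d},Xe_i\rangle=0$ and there is nothing to prove, which dovetails with the fact that the putative path of $(X,0)$ would open with a step leaving the region of nonnegative height and hence could not be a Motzkin path in the first place.

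The remaining degenerate cases, in which a structure constant ($l_i$, $d_i$, or $u_{i-1}$) is zero, are equally harmless: there the left-hand side is $0$ and the asserted implication is vacuous, which is exactly why the lemma claims only a necessary condition. Nonnegativity of all heights along the concatenated path is inherited step by step — the tail is a Motzkin path by the induction hypothesis, and the newly added first vertex $(0,i)$ has $i\ge 0$ by hypothesis. I do not anticipate a genuine obstacle: this is a structural bookkeeping statement, and the only points demanding care are keeping the right-to-left reading convention consistent with the assignment of steps to letters, and matching "index shift under $y$" with "endpoint of the first step" in each of the three cases, so that the first step glues correctly onto the path supplied by the induction hypothesis.
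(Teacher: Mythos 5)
Your proof is correct and takes essentially the same approach as the paper's: the published argument simply scans $X$ right-to-left, observing that each partial result is a scalar multiple of a single basis vector whose index rises by one for each $L$, falls by one for each $U$, is unchanged by each $D$, and that $Ue_0=0$ annihilates the vector exactly when the associated path would dip below height zero. Your induction on $m$ is just a formal packaging of that same bookkeeping, so there is nothing to add.
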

\begin{proof}
Applying $X$ to $e_i$ involves applying the
operators $L$, $D$, $U$ in it, reading $X$ from right to
left. While applying these operators, at every instance of the 
calculation our partial result is a scalar multiple of a single basis
vector. Each $L$
increases the index of this basis vector by $1$, each $U$ decreases it
by $1$ and each $D$ leaves the index unchanged. Thus the end result is 
either zero or a scalar multiple of $e_{i+d}$ where $d$ is
the difference between the number of $L$s and $U$s in $X$ (and may be
negative). If at any instance the number of $U$s read exceeds the
number of $L$s by more than $i$ then we get zero by
$Ue_{0}=0$. Therefore, if $\langle e_{i+d}, X e_i\rangle\neq 0$ 
then $(X,i)$ must represent a Motzkin path starting at $(0,i)$ and
necessarily ending at $(m,i+d)$.  
\end{proof}
The next lemma generalizes equations (4.6) and (4.7)
of Hodges \& Sukumar (2007). Our formulas look somewhat different mainly
because of our convention of reading $X$ right-to-left. 
\begin{lemma}
\label{L_Mrec}
For all $p,i\geq 0$ and $d\geq -i$ we have
\begin{align}
\label{E_M1}
\langle e_{i+d}, XDL^pe_i\rangle &= d_{p+i}\cdot \langle e_{i+d},
XL^pe_i \rangle
\quad\mbox{and}\\
\label{E_M2}
\langle e_{i+d}, XUL^{p+1}e_i\rangle
&= l_{p+i}u_{p+i} \cdot \langle e_{i+d}, XL^pe_i\rangle.
\end{align}
\end{lemma}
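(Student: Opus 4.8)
The plan is to verify both identities directly by tracking what happens when the rightmost block of operators acts on $e_i$, exactly in the spirit of the proof of Lemma~\ref{L_Motzkin}. First I would observe that $L^p e_i = l_i l_{i+1} \cdots l_{i+p-1} \, e_{i+p}$, which follows by an immediate induction from $Le_n = l_n e_{n+1}$ in \eqref{E_LDU}. In particular $L^p e_i$ is a scalar multiple of the single basis vector $e_{i+p}$, and this is the key fact that makes the rest of the computation a one-line manipulation.

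For \eqref{E_M1}, I would write $XDL^p e_i = X D (L^p e_i)$, then apply $D e_{i+p} = d_{i+p} e_{i+p}$ from \eqref{E_LDU} to pull the scalar $d_{i+p}$ out: $XDL^p e_i = d_{p+i} \, X L^p e_i$. Taking the inner product with $e_{i+d}$ and using linearity of $T$ (hence of the word $X$ read as a composition of linear operators) gives the claim. For \eqref{E_M2}, the same idea applies with one extra application of $L$: I would compute $UL^{p+1} e_i = U L (L^p e_i)$, first applying $L$ to get $L^{p+1} e_i = l_{p+i} \, (L^p e_i) \cdot (\text{scaling})$ — more precisely $L^{p+1} e_i$ is $l_{p+i}$ times $L$ applied to $e_{i+p}$ landing on $e_{i+p+1}$ — and then $U e_{i+p+1} = u_{p+i} e_{i+p}$ by \eqref{E_LDU}, so that $U L^{p+1} e_i = l_{p+i} u_{p+i} \, L^p e_i$. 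Applying $X$ and pairing with $e_{i+d}$ yields \eqref{E_M2}.

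The one genuinely delicate point — the only thing I would be careful to state — is that these manipulations are legitimate even though $T$ and its pieces $L, D, U$ are only defined on $D(T)$, which contains the linear span of the $e_n$. Since $L^p e_i$, $DL^p e_i$, $UL^{p+1} e_i$, and all their further images under the letters of $X$ are scalar multiples of single basis vectors, every vector to which we apply an operator lies in this linear span, so no domain issue arises and all the steps are valid. I do not expect any real obstacle here; the lemma is essentially a bookkeeping identity, and the indices $p+i$ in the subscripts are exactly what one reads off the level at which the relevant step of the associated Motzkin path (in the sense of Lemma~\ref{L_Motzkin}) starts, namely level $i+p$ after the initial run of $p$ southeast steps encoded by $L^p$.
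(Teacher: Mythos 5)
Your proof is correct and is exactly the direct computation the paper has in mind: the paper simply declares the lemma's proof ``immediate,'' and your verification via $L^p e_i = l_i l_{i+1}\cdots l_{i+p-1}\, e_{i+p}$, the action of $D$ and $U$ from \eqref{E_LDU}, and linearity of the word $X$ is that immediate argument. (Only a cosmetic slip: $L^{p+1}e_i$ is the scalar $l_i\cdots l_{i+p-1}$ times $Le_{i+p}=l_{i+p}e_{i+p+1}$, not ``$l_{p+i}$ times $L$ applied to $e_{i+p}$''; your final identity $UL^{p+1}e_i = l_{p+i}u_{p+i}\,L^p e_i$ is nonetheless right.)
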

The proof is immediate.
Using eq.\ (\ref{E_hist}) and lemmas ~\ref{L_Motzkin} and~\ref{L_Mrec}
we may express each $\langle e_{i+d},T^m e_i\rangle$ in terms of a total
weight of weighted Motzkin paths. 
\begin{theorem}
\label{T_Motzkin}
Let $i\geq 0$ and $d\geq -i$ be integers, $T: D(T)\subset
\ell^2({\mathbb Z}_{\geq 0})\rightarrow \ell^2({\mathbb Z}_{\geq 0})$ a
tridiagonal operator and $T=L+D+U$ its $LDU$-decomposition. Then we have 
$$
\langle e_{i+d}, T^m e_i\rangle=
\begin{cases}
\sum_{\omega} \wt(\omega)&\mbox{if $d=0$,}\\
l_il_{i+1}\cdots l_{i+d-1}\sum_{\omega} \wt(\omega)&\mbox{if $d>0$,}\\
u_{i-1}u_{i-2}\cdots u_d\sum_{\omega} \wt(\omega)&\mbox{if $d<0$.}\\
\end{cases}
$$
Here $\sum_{\omega} \wt(\omega)$ is the total weight of all weighted
Motzkin paths of length $m$ starting at $(0, i)$, ending
at $(m,i+d)$ such that each northeast step has weight
$1$, each southeast step starting at level $n$ has weight
$\lambda_n=l_{n-1}u_{n-1}$, and each east step at level $n$ has weight
$d_{n}$.  In particular, if $D$ is identically zero then 
$\sum_{\omega} \wt(\omega)$ is the total weight of all weighted
Dyck paths of length $m$ starting at $(0,i)$, ending
at $(m,i+d)$ such that each northeast step has weight
$1$ and each southeast step starting at level $n$ has weight
$\lambda_n=l_{n-1}u_{n-1}$. 
\end{theorem}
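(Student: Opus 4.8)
The plan is to expand $T^m = (L+D+U)^m$ into a sum over all words $X$ of length $m$ in the letters $L,D,U$, so that
\[
\langle e_{i+d}, T^m e_i\rangle = \sum_{X} \langle e_{i+d}, X e_i\rangle,
\]
and then invoke Lemma~\ref{L_Motzkin}: the only words $X$ contributing a nonzero term are those whose associated lattice path (read right-to-left, with $L\mapsto$ southeast, $U\mapsto$ northeast, $D\mapsto$ east) is a genuine Motzkin path from $(0,i)$ to $(m,i+d)$. Thus the sum is indexed by pairs $(\omega, X)$ where $\omega$ is such a Motzkin path and $X$ is the corresponding word; but in fact the word $X$ is exactly determined by the step-type sequence of $\omega$, so the sum is simply over all Motzkin paths $\omega$ of length $m$ from $(0,i)$ to $(m,i+d)$, each contributing $\langle e_{i+d}, X_\omega e_i\rangle$.

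Next I would compute $\langle e_{i+d}, X_\omega e_i\rangle$ for a fixed admissible Motzkin path $\omega$. The idea is to apply the operators in $X_\omega$ one at a time, right-to-left; after reading a prefix of the reversed word we are at a scalar multiple of a single basis vector $e_k$, where $k$ is the current height of $\omega$. Using Lemma~\ref{L_Mrec} repeatedly — more precisely, peeling off letters from the left end of $X_\omega$ (equivalently, reading $\omega$'s steps one by one) — each $D$-step occurring at level $n$ multiplies the running scalar by $d_n$, each $U$-step starting at level $n$ (i.e. going from level $n$ to $n+1$) contributes a factor $l_n$ from $Le_n = l_n e_{n+1}$, and each $L$-step... wait, here one must be careful about the left-to-right convention: in the operator word read right-to-left, an $L$ raises the index and a $U$ lowers it, so a northeast step of $\omega$ (index lowered overall? no) — I would track this bookkeeping explicitly. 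The cleanest route is: by Lemma~\ref{L_Mrec}, each time the path does a northeast step immediately followed (later, in operator order) by the matching southeast step, those two operators together produce a factor $l_n u_n = \lambda_{n+1}$; using eq.~(\ref{E_hist}) (Viennot's matching of each northeast step to the first subsequent southeast step at the level above) one reduces the general path to the level-preserving case, pulling out the prefactors $l_i l_{i+1}\cdots l_{i+d-1}$ when $d>0$ and $u_{i-1}\cdots u_d$ when $d<0$. For a level-returning sub-path, each southeast step starting at level $n$ contributes $\lambda_n = l_{n-1}u_{n-1}$, each northeast step contributes $1$, and each east step at level $n$ contributes $d_n$ — which is exactly $\wt(\omega)$ as defined in subsection~\ref{s_Vfav}.

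Assembling these pieces gives $\langle e_{i+d}, T^m e_i\rangle = \sum_\omega \langle e_{i+d}, X_\omega e_i\rangle$ equal to the claimed product of a prefactor with $\sum_\omega \wt(\omega)$, where the sum is over Motzkin paths of length $m$ from $(0,i)$ to $(m,i+d)$ with the stated weights. The final sentence of the theorem, about the case $D\equiv 0$, is then immediate: if all $d_n = 0$ then any Motzkin path containing an east step has weight zero, so only Dyck paths survive.

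The main obstacle I expect is the sign/direction bookkeeping: reconciling the right-to-left reading convention of the operator word with Viennot's left-to-right Motzkin path conventions, and making sure the factor attached to a northeast step versus a southeast step comes out as claimed (northeast weight $1$, southeast-from-level-$n$ weight $l_{n-1}u_{n-1}$, rather than the reverse or split as $l$ and $u$ separately). The substantive identity one is really using here is eq.~(\ref{E_ldef})/(\ref{E_hist}) — matching each northeast step to its first subsequent southeast step one level up — which converts the natural "$\wt_1$-type" valuation (an $l_n$ for going up from $n$, a $u_n$ for going down to $n$) into the monic "$\wt$-type" valuation with $\lambda_n = l_{n-1}u_{n-1}$ on southeast steps, and simultaneously extracts the correct prefactor for paths that do not return to their starting level. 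Once that matching is set up carefully, everything else is a routine induction on $m$ using Lemma~\ref{L_Mrec}.
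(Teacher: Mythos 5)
Your proposal is correct and follows essentially the same route as the paper, which derives the theorem precisely by expanding $(L+D+U)^m$ into words, discarding non-contributing words via Lemma~\ref{L_Motzkin}, evaluating each surviving word with Lemma~\ref{L_Mrec}, and converting the natural ``$l_n$ up, $u_n$ down'' valuation into Viennot's $\wt$ valuation with prefactors via eq.~(\ref{E_hist}). The bookkeeping issue you flag resolves exactly as you indicate (matched up--down pairs give $\lambda_n=l_{n-1}u_{n-1}$, the unmatched steps give the prefactor), so no gap remains.
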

As a consequence of theorem~\ref{T_Vmain}, setting $d=0$ and $i=0$ 
in theorem~\ref{T_Motzkin} yields the following algebraic statement. 
\begin{corollary}
\label{C_Motzkin}
If $l_{n}u_{n}\neq 0$ for all $n\geq 0$ then for all $m$,
$\langle e_{0}, T^m e_{0}\rangle$ is the $m$th moment of the functional
associated to the OPS given by 
(\ref{E_Favinit}) and (\ref{E_Favrec}) where $b_n=d_{n}$ and
$\lambda_n=l_{n-1}u_{n-1}$.
\end{corollary}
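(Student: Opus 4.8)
\emph{Overall approach.} The plan is to derive this as an immediate specialization of Theorem~\ref{T_Motzkin}, reconciled with Favard's theorem and Viennot's Theorem~\ref{T_Vmain}. Setting $d=0$ and $i=0$ in Theorem~\ref{T_Motzkin} gives that $\langle e_0, T^m e_0\rangle$ equals $\sum_\omega \wt(\omega)$, the total weight of all weighted Motzkin paths of length $m$ from $(0,0)$ to $(m,0)$ in which each northeast step has weight $1$, each southeast step starting at level $n$ has weight $\lambda_n=l_{n-1}u_{n-1}$, and each east step at level $n$ has weight $d_n$. The point is that this valuation is \emph{verbatim} Viennot's valuation from subsection~\ref{s_Vfav} for the parameter choice $b_n=d_n$ and $\lambda_n=l_{n-1}u_{n-1}$.

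\emph{Steps.} First I would invoke the hypothesis: since $l_nu_n\neq 0$ for all $n\geq 0$, we have $\lambda_n=l_{n-1}u_{n-1}\neq 0$ for all $n\geq 1$, so the monic polynomials $\{p_n(x)\}_{n\geq 0}$ defined by \eqref{E_Favinit} and \eqref{E_Favrec} with $b_n=d_n$, $\lambda_n=l_{n-1}u_{n-1}$ genuinely form an OPS by Favard's theorem, with an associated moment functional $f$, unique once normalized by $f(1)=1$. Second, I would match the two valuations step by step: northeast weight $1$ in both; an east step at level $k$ has weight $b_k=d_k$ in both; a southeast step starting at level $k$ has weight $\lambda_k=l_{k-1}u_{k-1}$ in both. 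Hence the quantity $\sum_\omega\wt(\omega)$ of Theorem~\ref{T_Motzkin} (at $d=i=0$) is exactly the number $\mu_m$ defined just before Theorem~\ref{T_Vmain}. Third, I would apply Theorem~\ref{T_Vmain} with $n=m$ and $k=\ell=0$ (using $p_0(x)=1$) to conclude that the linear map $f\colon x^m\mapsto\mu_m$ is the moment functional of this OPS, so $\mu_m=f(x^m)$ is its $m$th moment. Chaining the three identifications yields $\langle e_0, T^m e_0\rangle=\mu_m$, which is the assertion.

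\emph{Expected obstacle.} There is essentially no difficulty here; the only point requiring a word of care is the phrase ``the functional associated to the OPS'' — one must note that this is indeed the same normalized $f$ whose moments Viennot computes. This follows from the uniqueness (up to a nonzero scalar) of the moment functional cited after axioms (i)--(iii), together with the normalization $\mu_0=f(1)=1$ coming from the empty Motzkin path. I would state this reconciliation explicitly and otherwise present the proof as the short chain of identifications above.
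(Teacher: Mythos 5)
Your proof is correct and follows exactly the paper's route: the corollary is obtained by setting $d=i=0$ in Theorem~\ref{T_Motzkin} and invoking Theorem~\ref{T_Vmain} (with $n=m$, $k=\ell=0$), the hypothesis $l_nu_n\neq 0$ guaranteeing via Favard's theorem that the recurrence defines a genuine OPS. Your added remark on the normalization $\mu_0=1$ is a sensible precaution that the paper itself records in the remark immediately following the corollary.
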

\begin{remark}
As noted in the Preliminaries, an OPS and the associated moments determine each
other only up to a constant factor. In corollary~\ref{C_Motzkin} we
understand that the moments were defined as total weights of 
weighted Motzkin paths as in theorem~\ref{T_Vmain}. If we obtain the
moments by some other means, we always need to check whether an
adjustment by a constant factor is necessary. It suffices to check
whether $\mu_0=1$ is satisfied.
\end{remark}
When we apply theorem~\ref{T_Motzkin} to $d\neq 0$,
theorem~\ref{T_Vinverse} becomes useful, at least for the case $i=0$. 
\begin{theorem}
\label{T_inverse}
For all $m,d\geq 0$ we have 
$$\langle e_{d}, T^m e_{0}\rangle=l_0l_1\cdots l_{d-1}\mu_{m,d},$$ 
where $\mu_{m,d}$ is the coefficient of $x^d$ in the degree $m$ inverse
polynomial for system of monic polynomials given by
(\ref{E_Favinit}) and (\ref{E_Favrec}) where $b_n=d_n$ and
$\lambda_n=l_{n-1}u_{n-1}$. 
\end{theorem}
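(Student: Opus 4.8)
The plan is to deduce Theorem~\ref{T_inverse} directly from Theorem~\ref{T_Motzkin} together with Viennot's inverse-polynomial theorem (Theorem~\ref{T_Vinverse}), exactly as Corollary~\ref{C_Motzkin} was deduced from Theorem~\ref{T_Vmain}. First I would specialize Theorem~\ref{T_Motzkin} to the case $i=0$ and $d\geq 0$. In that regime the prefactor $u_{i-1}\cdots$ does not occur, and the statement reads
$$
\langle e_{d}, T^m e_{0}\rangle = l_0 l_1\cdots l_{d-1}\sum_{\omega}\wt(\omega),
$$
where $\sum_{\omega}\wt(\omega)$ is the total weight of all weighted Motzkin paths of length $m$ starting at $(0,0)$ and ending at $(m,d)$, with northeast steps weighted $1$, east steps at level $n$ weighted $d_n$, and southeast steps starting at level $n$ weighted $\lambda_n=l_{n-1}u_{n-1}$.

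The second step is to recognize that these are precisely the weighted Motzkin paths appearing in Viennot's Theorem~\ref{T_Vinverse}, applied to the system of monic polynomials defined by \eqref{E_Favinit} and \eqref{E_Favrec} with $b_n=d_n$ and $\lambda_n=l_{n-1}u_{n-1}$. Indeed, the weight $\wt(\omega)$ of subsection~\ref{s_Vfav} assigns weight $1$ to each northeast step, weight $b_k$ to each east step at level $k$, and weight $\lambda_k$ to each southeast step starting at level $k$ --- which matches the weighting produced by Theorem~\ref{T_Motzkin} once we substitute $b_n=d_n$ and $\lambda_n=l_{n-1}u_{n-1}$. Theorem~\ref{T_Vinverse} then identifies the total weight of all such paths of length $m$ from $(0,0)$ to $(m,d)$ with the coefficient $q_{m,d}$ of $x^d$ in the inverse polynomial $q_m(x)$, i.e.\ with $\mu_{m,d}$. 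Combining this with the displayed equation from the first step yields $\langle e_d, T^m e_0\rangle = l_0 l_1\cdots l_{d-1}\,\mu_{m,d}$, which is the claim.

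I do not expect a genuine obstacle here: the work of setting up the Motzkin-path machinery, verifying the weight correspondence, and handling the degenerate cases has already been carried out in Lemmas~\ref{L_Motzkin} and~\ref{L_Mrec} and Theorem~\ref{T_Motzkin}. The one point that deserves a sentence of care is that Theorem~\ref{T_Vinverse} --- unlike Theorem~\ref{T_Vmain} --- makes no nonvanishing hypothesis on the $\lambda_n$, so the present theorem holds for an arbitrary tridiagonal operator $T$, with no restriction such as $l_n u_n\neq 0$; this is why the statement is phrased for a "system of monic polynomials" rather than for an OPS. If one wished, one could also remark that when all $l_n u_n\neq 0$ the polynomials $\{p_n(x)\}$ form the OPS of Corollary~\ref{C_Motzkin} and $\mu_{m,0}=\langle e_0,T^m e_0\rangle$ is consistent with the $d=0$ case, but this consistency check is optional.
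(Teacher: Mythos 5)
Your proof is correct and is exactly the argument the paper intends: Theorem~\ref{T_inverse} is stated immediately after the remark that Theorem~\ref{T_Vinverse} becomes useful when applying Theorem~\ref{T_Motzkin} with $i=0$ and $d\neq 0$, and your combination of the $i=0$, $d\geq 0$ case of Theorem~\ref{T_Motzkin} with Viennot's identification of the path weights as the inverse-polynomial coefficients $q_{m,d}=\mu_{m,d}$ is precisely that deduction. Your side remark that no nonvanishing hypothesis on $l_nu_n$ is needed here is also accurate and consistent with the theorem's phrasing.
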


\section{Matrix entries in discrete series representations of $su(1,1)$}
\label{S_su11adapt}

For a (positive, or negative) discrete series representation $\pi$ of
$su(1,1)$, the operator 
\begin{equation}
\label{E_Tc}
T_c[\pi,c]:=\pi(-C)+c\cdot \pi(H)+\pi(B)
\end{equation}
is a tridiagonal operator for any $c\in {\mathbb R}$.
Furthermore, the operators appearing in the $LDU$-decomposition of
$T[\pi,c]$ are 
$$
L[\pi]=\pi(-C),\quad D[\pi,c]=c\cdot \pi(H),\quad U[\pi]=\pi(B). 
$$
It is easy to see that $l_n u_n\neq 0$ holds in all cases, thus we may
apply corollary~\ref{C_Motzkin} to compute $\langle e_0 ,(\pi(-C)+c\cdot
\pi(H)+\pi(B))^m e_0\rangle $ for every discrete series representation 
$\pi$, all $c\in {\mathbb R}$ and all $m\in {\mathbb N}$. In particular, 
using (\ref{E_psr}) and (\ref{E_psn}) we obtain following corollary.   
\begin{corollary}
\label{C_Motzkinsu}
For any $k>0$, $m\in {\mathbb N}$ and $c\in {\mathbb R}$, the matrix
entry $\langle e_0 ,(\pi^+_k(-C)+c\cdot \pi^+_k(H)+\pi^+_k(B))^m
e_0\rangle$ is the $m$th moment of the functional associated to the OPS 
$\{p[k,c]_n (x)\}_{n\geq 0}$ given by (\ref{E_Favinit}) and
(\ref{E_Favrec}) where, 
\begin{equation}
\label{E_psrrec}
b_n=2c(k+n)\quad\mbox{and}\quad \lambda_n=n(n+2k-1).  
\end{equation}
Similarly, the matrix
entry $\langle e_0 ,(\pi^-_k(-C)+c\cdot \pi^-_k(H)+\pi^-_k(B))^m
e_0\rangle$ is the $m$th moment of the functional associated to the OPS 
$\{p[k,-c]_n (x)\}_{n\geq 0}$. 
\end{corollary}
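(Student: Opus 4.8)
The plan is to apply Corollary~\ref{C_Motzkin} directly to the tridiagonal operator $T[\pi^+_k,c]=\pi^+_k(-C)+c\cdot\pi^+_k(H)+\pi^+_k(B)$, whose $LDU$-decomposition has already been identified in the paragraph preceding the statement. First I would read off the sequences $\{l_n\}_{n\geq 0}$, $\{u_n\}_{n\geq 0}$ and $\{d_n\}_{n\geq 0}$ from the action formulas~(\ref{E_psr}): the diagonal part $c\cdot\pi^+_k(H)$ gives $d_n=2c(k+n)$, while the two off-diagonal parts $\pi^+_k(B)$ and $\pi^+_k(-C)$ give $l_n=u_n=\sqrt{(n+1)(2k+n)}$ once they are matched with the index conventions of~(\ref{E_LDU}). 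Since $k>0$ we have $(n+1)(2k+n)\geq 2k>0$ for every $n\geq 0$, so $l_nu_n\neq 0$ for all $n$ — exactly the hypothesis needed to invoke Corollary~\ref{C_Motzkin}.

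That corollary then asserts that $\langle e_0,T[\pi^+_k,c]^m e_0\rangle$ equals the $m$th moment of the monic OPS determined by~(\ref{E_Favinit}) and~(\ref{E_Favrec}) with $b_n=d_n=2c(k+n)$ and $\lambda_n=l_{n-1}u_{n-1}=n(n+2k-1)$, the moments being the total weights of weighted Motzkin paths as in Theorem~\ref{T_Vmain}, so that the normalization $\mu_0=1$ is automatic and matches $\langle e_0,e_0\rangle=1$ with no adjusting constant needed. These $b_n$ and $\lambda_n$ are by definition the recurrence coefficients of $\{p[k,c]_n(x)\}_{n\geq 0}$, which proves the first assertion.

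For the negative discrete series I would run the identical argument with~(\ref{E_psn}) in place of~(\ref{E_psr}). Now $c\cdot\pi^-_k(H)$ produces $d_n=-2c(k+n)=2(-c)(k+n)$, while $\pi^-_k(-C)$ and $\pi^-_k(B)$ each acquire an overall minus sign; since $\lambda_n=l_{n-1}u_{n-1}$ is a product of two such factors it is unchanged and again equals $n(n+2k-1)$. Hence the recurrence coefficients are precisely those of $\{p[k,-c]_n(x)\}_{n\geq 0}$, giving the second assertion.

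The proof is essentially bookkeeping; the only delicate point is aligning the sign and index conventions of Definition~\ref{D_3diag} and~(\ref{E_LDU}) with those of~(\ref{E_psr}) and~(\ref{E_psn}). This turns out to be harmless precisely because $\lambda_n=l_{n-1}u_{n-1}$ is symmetric in the two off-diagonal sequences and insensitive both to which of $\pi^\pm_k(B),\pi^\pm_k(-C)$ is labelled the ``lower'' and which the ``upper'' part, and to the common sign flip that distinguishes the positive from the negative series.
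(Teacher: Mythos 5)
Your proposal is correct and is essentially the paper's own argument: the paper likewise just invokes Corollary~\ref{C_Motzkin} with the $LDU$-decomposition $\pi(-C)+c\,\pi(H)+\pi(B)$ and reads off $d_n$, $l_n$, $u_n$ from~(\ref{E_psr}) and~(\ref{E_psn}), noting $l_nu_n\neq 0$. Your bookkeeping of the index/sign conventions (in particular that only the product $l_{n-1}u_{n-1}=n(n+2k-1)$ matters, so the sign flip in the negative series is harmless) is exactly the intended, and correct, verification.
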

The appropriate parts of the next proposition (restricted to the cases
$|c|=1$, $|c|>1$ and $|c|<1$, respectively) are either implied or 
explicitly stated in some equivalent form in the works of Van Der Jeugt
(1997), Koelink \& Van Der Jeugt (1998) and Groenevelt \& Koelink (2002).
A detailed and deep simultaneous discussion of all three cases
may be found in ch.\ 3 of Groenevelt's thesis (2004). Here we make a
summary statement with respect to our bases and outline the proof that
ensues from applying Meixner's classical method. 

\begin{proposition}[Groenevelt--Koelink--Van Der Jeugt]
\label{P_lmx}
The OPS $\{p[k,c]_n (x)\}_{n\geq 0}$ defined in corollary~\ref{C_Motzkinsu} 
is given by 
$$
p[k,c]_n (x)=
\begin{cases}
(\sqrt{c^2-1}-c)^n m_n\left(\frac{x}{2\sqrt{c^2-1}}-k; 2k,
  \frac{c-\sqrt{c^2-1}}{c+\sqrt{c^2-1}}\right)& \mbox{if
  $|c|>1$,}\\
L_n^{(2k-1)}(x) &\mbox{if $c=1$,}\\
(-1)^n L_n^{(2k-1)}(-x) &\mbox{if $c=-1$},\\
(\sqrt{1-c^2})^n\cdot M_n(x/\sqrt{1-c^2};c/\sqrt{1-c^2},2k) &\mbox{if
  $|c|< 1$.} 
\end{cases}
$$
Here the polynomials $\{L_n^{(2k-1)}(x)\}_{n\geq 0}$ are
Laguerre polynomials, the polynomials
$\{m_n(x;
2k,(c-\sqrt{c^2-1})/(c+\sqrt{c^2-1}))\}_{n\geq 0}$ and
$\{M_n(x;c/\sqrt{1-c^2},2k)\}_{n\geq 0}$, respectively,  
are Meixner polynomials of the first and second kind, respectively.
\end{proposition}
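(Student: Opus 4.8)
The plan is to prove Proposition~\ref{P_lmx} by recognizing, in each of the four cases, that the stated right-hand side is a sequence of \emph{monic} polynomials obeying the initial conditions (\ref{E_Favinit}) and the three-term recurrence (\ref{E_Favrec}) with \emph{exactly} the coefficients $b_n=2c(k+n)$ and $\lambda_n=n(n+2k-1)$ of Corollary~\ref{C_Motzkinsu}; since a monic sequence satisfying (\ref{E_Favinit})--(\ref{E_Favrec}) is determined by those coefficients, this identifies it with $p[k,c]_n(x)$. The only mechanism needed is the transformation law for an affine change of variable combined with a rescaling: if $\{\pi_n(x)\}_{n\geq 0}$ is monic with recurrence coefficients $(b_n,\lambda_n)$, then $q_n(x):=s^n\pi_n\!\big((x-r)/s\big)$ is monic with recurrence coefficients $\big(r+sb_n,\ s^2\lambda_n\big)$. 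Feeding in the recurrences recalled in the Preliminaries --- the monic Laguerre coefficients $b_n=2n+\alpha+1$, $\lambda_n=n(n+\alpha)$, the monic Meixner-I coefficients (\ref{E_mx1rec}), and the monic Meixner-II coefficients (\ref{E_mx2rec}) --- the proof reduces to solving, case by case, a small linear/quadratic system for $r$, $s$ and the family parameters.

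For $c=1$ the target $b_n=2n+2k$, $\lambda_n=n(n+2k-1)$ matches the Laguerre data at once with $\alpha=2k-1$, $r=0$, $s=1$, so $p[k,1]_n(x)=L_n^{(2k-1)}(x)$; the case $c=-1$ then follows by the substitution $x\mapsto -x$, which negates every $b_n$ while preserving every $\lambda_n$. For $|c|<1$ I would use $\pi_n=M_n(\,\cdot\,;\delta,\eta)$ with $r=0$: matching forces $\eta=2k$, $s\delta=c$ and $s^2(\delta^2+1)=1$, whose solution is $s=\sqrt{1-c^2}$ and $\delta=c/\sqrt{1-c^2}$, yielding the fourth line. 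For $|c|>1$ I would use $\pi_n=\hat m_n(\,\cdot\,;\beta,c')$: matching forces $\beta=2k$ together with $s(1+c')/(1-c')=2c$ and $s^2c'/(1-c')^2=1$; writing $w=\sqrt{c'}$ turns the first relation into $w^2-2cw+1=0$, so $\sqrt{c'}=c-\sqrt{c^2-1}$ and $c'=(c-\sqrt{c^2-1})^2=(c-\sqrt{c^2-1})/(c+\sqrt{c^2-1})$, after which $s=2\sqrt{c^2-1}$ and $r=2k\sqrt{c^2-1}$, so $(x-r)/s=x/(2\sqrt{c^2-1})-k$ as in the statement. Finally I would re-expand $\hat m_n$ in the normalization $m_n=((c'-1)/c')^n\hat m_n$ used in Proposition~\ref{P_lmx}; the scalar $\big(2\sqrt{c^2-1}\cdot c'/(c'-1)\big)^n$ that then multiplies $m_n$ simplifies, using $c'-1=-2\sqrt{c^2-1}(c-\sqrt{c^2-1})$, to $(\sqrt{c^2-1}-c)^n$, which is precisely the prefactor in the first line.

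One may also proceed in the spirit of Meixner's (1934) classical method, which is perhaps what makes the ``coincidence'' most transparent: the shape of the recurrence coefficients --- $b_n$ affine in $n$ and $\lambda_n=n(n+2k-1)$ of the form $n$ times an affine function with positive leading term --- places $\{p[k,c]_n\}$ in Meixner's class of orthogonal Sheffer sequences, so its exponential generating function has the form (\ref{E_Sdef}), $A(t)\exp(xg(t))$ with $g(0)=0$, $g'(0)=1$, $A(0)=1$. Substituting this ansatz into (\ref{E_Favrec}) produces first-order ODEs for $g$ and $A$ that integrate to elementary functions, and the auxiliary quadratic occurring in the integration has two distinct real roots, a double root, or a conjugate pair of complex roots exactly according as $|c|>1$, $|c|=1$, or $|c|<1$ --- the trichotomy in the statement. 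In either approach the mathematics is routine; I expect the only genuine nuisance to be bookkeeping: keeping the square-root branches consistent across the four sub-cases (in particular the sign of $\sqrt{c^2-1}$ and of $s$ when $c<-1$), and reconciling the several normalization and parameter conventions in play --- those of Koekoek \& Swarttouw, Chihara, Viennot, and the ad hoc normalization $m_n=((c'-1)/c')^n\hat m_n$ adopted here.
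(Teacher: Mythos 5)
Your proposal is correct and follows essentially the same route as the paper: the paper also applies Meixner's method (via Chihara, ch.\ VI \S 3), organizing the cases by the sign of $d^2-4g=4(c^2-1)$ and determining the shift ($\gamma_{k,c}=-2k\sqrt{c^2-1}$, your $r$), the scale, and the parameters $\beta=\eta=2k$, $c'=(c-\sqrt{c^2-1})/(c+\sqrt{c^2-1})$, $\delta=c/\sqrt{1-c^2}$ exactly as you do. Your direct coefficient-matching under $q_n(x)=s^n\pi_n((x-r)/s)$ is the same computation the paper performs by passing to Chihara's canonical form, and your normalization bookkeeping for $m_n$ versus $\hat m_n$ reproduces the prefactor $(\sqrt{c^2-1}-c)^n$ correctly.
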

\begin{proof}
We apply Meixner's method to classify 
all OPS $\{P_n(x)\}_{n\geq 0}$ given by (\ref{E_Favinit}) and
(\ref{E_Favrec}) where 
\begin{equation}
\label{E_mxrec}
b_n=dn+f\quad\mbox{and}\quad \lambda_n=n(gn+h),   
\end{equation}
see section 3 of ch.\ 6 in Chihara (1978). By (\ref{E_psrrec})
we have
$$
d=2c, \quad f=2ck, \quad g=1,\quad h=2k-1. 
$$
The case $d^2-4g=4(c^2-1)>0$ is thus equivalent to $|c|>1$. Following 
section 3 of ch.\ 6 in Chihara (1978) we set
$\rho:=\sqrt{d^2-4g}=2\sqrt{c^2-1}$.  Note next that replacing $p[k,c]_n
(x)$ with 
$$q[k,c]_n(x):=p[k,c]_n(x-\gamma_{k,c})$$  
yields an OPS $\{q_n[k,c](x)\}_{n\geq 0}$ satisfying (\ref{E_mxrec}) with 
$$
d=2c, \quad f=2ck+\gamma_{k,c}, \quad g=1,\quad h=2k-1. 
$$
We want to choose $\gamma_{k,c}$ in such a way that 
$$
f=\frac{2(g+h)}{d+\rho},\quad\mbox{i.e.,}\quad
2ck+\gamma_{k,c}=\frac{4k}{2c+2\sqrt{c^2-1}} 
$$
is satisfied. This yields 
$$
\gamma_{k,c}=\frac{2k}{c+\sqrt{c^2-1}}-2kc=2k(c-\sqrt{c^2-1})-2kc=
-2k\sqrt{c^2-1} 
$$
and we
may apply the results stated in section 3 of ch.\ 6 in Chihara (1978) to 
$\{q_n[k,c](x)\}_{n\geq 0}$. The parameter ``$c$'' appearing in the
definition of the corresponding Meixner polynomials of the first kind becomes 
$$
\frac{d-\rho}{d+\rho}=\frac{2c-2\sqrt{c^2-1}}{2c+2\sqrt{c^2-1}}
=\frac{c-\sqrt{c^2-1}}{c+\sqrt{c^2-1}}. 
$$
and the parameter $\beta$ becomes $1+(2k-1)/1=2k$. We obtain
$$
\left(\frac{1}{\sqrt{c^2-1}-c}\right)^n q[k,c]_n (2\sqrt{c^2-1}x)
=
m_n(x;2k,(c-\sqrt{c^2-1})/(c+\sqrt{c^2-1})) 
$$
or, equivalently, 
$$
q[k,c]_n (x)
=(\sqrt{c^2-1}-c)^n m_n\left(\frac{x}{2\sqrt{c^2-1}}; 2k,
  \frac{c-\sqrt{c^2-1}}{c+\sqrt{c^2-1}}\right). 
$$
The formula for $p[k,c]_n(x)$ now follows from
$p[k,c]_n(x)=q[k,c]_n(x+\gamma_{k,c})$. 

The case $d^2-4g=4(c^2-1)<0$ is equivalent to $|c|<1$. The parameter
$\delta$ of the corresponding Meixner polynomials of the second kind
becomes $2c/\sqrt{4-4c^2}=c/\sqrt{1-c^2}$ and the parameter $\eta$
  becomes $1+(2k-1)/1=2k$.
We obtain
$$
\left(\frac{1}{\sqrt{1-c^2}}\right)^n p[k,c]_n(x\sqrt{1-c^2})
=
M_n(x;c/\sqrt{1-c^2},2k).
$$
The case $d^2-4g=4(c^2-1)=0$ is equivalent to $|c|=1$.
For $c=1$, (\ref{E_psrrec}) gives $b_n=2n+2k$ and $\lambda_n=n(n+2k-1)$, 
which define the monic Laguerre polynomials $p[k,1]_n(x)=L_n^{(2k-1)}(x)$. 
Similarly, we also have $p[k,-1]_n(x)=(-1)^n L_n^{(2k-1)}(-x)$. 
\end{proof}
\begin{remark} 
Inspired by theorem~\ref{T_KVJ} and
corollary~\ref{C_MPn}, for $|c|<1$ we may introduce a $\phi$ such that
$0<\phi<\pi$ and $c=\cos(\phi)$ hold. Then we obtain
\begin{equation}
p[k,\cos(\phi)]_n (x)=\sin(\phi)^{n}
M_n(x/\sin(\phi);\cot(\phi),2k).
\end{equation}
\end{remark}
As indicated in the preliminaries, a combinatorial interpretation of
the ``Laguerre case'' $|c|=1$ may be obtained using Viennot's (1983)
``Laguerre stories''. The ``best'' combinatorial interpretations (linked
to permutation enumeration with no special weights) is associated to the
case $k\in\{0,1\}$.  At the light of corollary~\ref{C_SHdec}, this
means that a combinatorial study of the matrix entries of the operator
$R+L+S$ appearing in the work of Sukumar \& Hodges (2007) is easiest
when  $(\alpha-1)/2$ or $(\alpha+1)/2$ belongs to 
$\{0,1\}$. As a consequence of (\ref{E_Lmoment}) we obtain
\begin{equation}
\langle e_{0}, (L+S+R)^ne_{0}\rangle =\left(\frac{\alpha+1}{2}\right)_n
\quad\mbox{for $\alpha\in (-1,1]$.}
\end{equation}
The case $\alpha=-1$ may be handled using
remark~\ref{rem:nops}. Similarly, for $e_1$ we have
\begin{equation}
\langle e_{1},(L+S+R)^ne_{1}\rangle =\left(\frac{\alpha+3}{2}\right)_n
\quad\mbox{for all $\alpha\in [-1,1]$.}
\end{equation}

Another combinatorially interesting case is $c=0$ where 
proposition~\ref{P_lmx} gives $p[k,0]_n(x)=M_n(x;0,2k)$. 
As a consequence, the operator $L+R$ appearing in the work of
Sukumar \& Hodges (2007) satisfies 
\begin{equation}
\label{E_L+Rmu0}
\langle e_{0}, (L+R)^{2n}e_{0}\rangle =\mu_{2n}(0,(\alpha+1)/2)
\quad\mbox{for $\alpha\in (-1,1]$.}
\end{equation}
The case $\alpha=-1$ may again be handled using
remark~\ref{rem:nops}. Similarly, for $e_1$ we have
\begin{equation}
\label{E_L+Rmu1}
\langle e_{1},(L+R)^{2n}e_{1}\rangle =\mu_{2n}(0,(\alpha+3)/2)
\quad\mbox{for all $\alpha\in [-1,1]$.}
\end{equation}
Here $\mu_{2n}(0,\eta)$ stands for the $(2n)$th moment associated to the
Meixner polynomials of the second kind with parameters $(0,\eta)$.
For $\eta=1$, (\ref{E_m2n}) takes a very simple form, as this is noted
in Example 21 of ch.\ II by Viennot (1983). Thus, by applying
corollary~\ref{C_Motzkinsu} to  $c=0$ and
$k=1/2$, we obtain a combinatorial explanation for the first half of
(\ref{E_ETe}). 
\begin{remark}
Viennot's (1983) work offers a nice model for the moments $\mu_{2n}(0,1)$
by further restricting the appropriate ``Laguerre stories''. 
This corresponds to enumerating certain zig-zag permutations.  For other
values of $\eta$ we could not avoid avoid considering ``weighted
stories'' in Viennot's (1983) model.
The case $\alpha=0$ in the work Sukumar \& Hodges (2007) does not
correspond to $\eta=1$, yet Sukumar \& Hodges (2007) find a simple,
non-weighted model by considering the enumeration of {\em transposition
  zig-zag classes}. It is an interesting question for future research
whether this enumeration question may be related to Viennot's (1983) theory by
  constructing a highly nontrivial bijection, similarly to Viennot's
  (1983) bijection between all permutations and his ``Laguerre stories''. 
\end{remark}
For other values of $\eta$ we may use lemma~\ref{L_Carlitz} which gives 
\begin{equation}
\label{E_ETg}
\langle e_0, \pi^+_{k}(B-C)^m e_0\rangle=E^{(2k)}_{2m}\quad\mbox{where}\quad
\sec ^{2k} (t)=\sum_{n\geq 0} E^{(2k)}_n \frac{t^n}{n!}
\end{equation}
In particular, substituting $k=1$ into (\ref{E_ETg}) yields the second
half of (\ref{E_ETe}). Indeed, as noted in eq.\ (3.2) of Sukumar \&
Hodges (2007), we have $E^{(2)}_n=T_{2n+1}$. Using (\ref{E_ETg}) for an
arbitrary parameter $k$ allows us to rewrite (\ref{E_L+Rmu0}) and
  (\ref{E_L+Rmu1}) as 
\begin{equation}
\langle e_{0},(L+R)^{2n}e_{0}\rangle=E^{((\alpha+1)/2)}_{2n}
\quad\mbox{for $\alpha\in (-1,1]$ and }
\end{equation}
\begin{equation}
\langle e_{1},(L+R)^{2n}e_{1}\rangle =E^{((\alpha+3)/2)}_{2n}
\quad\mbox{for all $\alpha\in [-1,1]$}.
\end{equation}

Finally, in the case $|c|>1$, leading to Meixner polynomials of the first
kind, we may use (\ref{E_mx1mom}) to find $\langle e_0
,(\pi^+_k(-C)+c\cdot \pi^+_k(H)+\pi^+_k(B))^m e_0\rangle$. We omit the
details for this case and mention only one, combinatorially interesting,
subcase. For $k=1/2$ and $c=3/\sqrt{8}$ the parameter $\beta$ of the
associated Meixner polynomial becomes $1$, whereas the parameter ``$c$''
becomes $1/2$. As it is explained in ch. II, \S 8 of Viennot's work
(1983), the unitary Meixner polynomials of the first kind
$\{\hat{m}_n(x;1,1/2)\}_{n\geq 0}$ are the {\em Kreweras polynomials}
whose moments $\mu_n$ are the numbers of {\em bicolored permutations} of
$\{1,2,\ldots,n\}$ where the two colors are used to color the descents
of each permutation.  

What we have seen thus far is that the application of 
corollary~\ref{C_Motzkin} to positive discrete series representations of
$su(1,1)$ is linked to the use of Laguerre
polynomials and of Meixner polynomials of both kinds. The same orthogonal
polynomial sequences need to be used to apply theorem~\ref{T_Vinverse}
to the same representations. The only difference is that, in each case,
instead of the moments we would need to use the inverse polynomials of
the same OPS. We may find these inverse
polynomials using (\ref{E_Sdef}) and (\ref{E_Sinv}). For brevity's sake,
in the rest of the section we only outline how to find the inverse
polynomials of Laguerre and Meixner polynomials, and leave the
adaptation of these data to theorem~\ref{T_Vinverse} to the reader. 

The Laguerre polynomials $\{L_n^{(\alpha)}(x)\}_{n\geq 0}$
satisfy (29) in ch.\ II of Viennot's (1983) work:
\begin{equation}
\sum_{n\geq 0} L_n^{(\alpha)}(x) \frac{t^n}{n!}
=(1+t)^{-\alpha-1}\exp\left(x \frac{t}{1+t}\right),
\end{equation}
thus we must set $f(t)=(1+t)^{-\alpha-1}$ and $g(t)=t/(1+t)$ in
(\ref{E_Sdef}). As indicated in table 4 of ch.\ III by Viennot (1983), this
implies $g^{\langle -1\rangle }(t)=t/(1-t)$. Thus the inverse polynomials of
$\{L_n^{(\alpha)}(x)\}_{n\geq 0}$ are given by 
\begin{equation}
\label{E_Linv}
\sum_{n\geq 0} Q_n(x) \frac{t^n}{n!}
=\left(\frac{1}{1-t}\right)^{\alpha+1}\exp\left(x \frac{t}{1-t}\right).
\end{equation}
Hence we have
\begin{align*}
\mu_{n,d}&=[x^d] Q_n(x)
=\frac{n!}{d!}\sum_{j=0}^{n-d} (-1)^j\binom{-\alpha-1}{j}
\binom{-d}{n-d-j}(-1)^{n-d-j}\\ 
&=\frac{(-1)^{n-d}n!}{d!}\binom{-d-\alpha-1}{n-d},
\end{align*}
implying
\begin{equation}
\label{E_Linvc}
\mu_{n,d}=\binom{n}{d}(\alpha+1+d)_{n-d}.
\end{equation}

The generating function for the  unitary Meixner polynomials 
$\{\hat{m}_n(x;\beta,c)\}_{n\geq 0}$ may be found in table 4 of ch.\ III
by Viennot (1983): 
$$
\sum_{n\geq 0} \hat{m}_n(x;\beta,c)
\frac{t^n}{n!}=\left(1+\frac{tc}{1-c}\right)^{-\beta}
\left(\frac{1-c+t}{1-c+ct}\right)^{x},$$
thus we must set $f(t)=(1+tc/(1-c))^{-\beta}$ and $g(t)=\ln
(1-t+c)-\ln(1-c+ct)$ in (\ref{E_Sdef}). 
As indicated in table 4 of ch.\ III by Viennot (1983), this
implies $g^{\langle -1\rangle }(t)=(1-c)(\exp(t)-1)/(1-c\exp(t))$, and
the inverse polynomials of $\{\hat{m}_n(x;\beta,c)\}_{n\geq 0}$
are given by 
\begin{equation}
\label{E_M1inv}
\sum_{n\geq 0} Q_n(x) \frac{t^n}{n!}
=\left(\frac{1-c}{1-c\exp(t)}\right)^{\beta}
\exp\left(\frac{x(1-c)(\exp(t)-1)}{1-c\exp(t)}\right).
\end{equation}

The Meixner polynomials of the second kind $\{M_n(x;,\delta,\eta)\}_{n\geq 0}$
satisfy (69)  in ch.\ II of Viennot's (1983) work:
$$
\sum_{n\geq 0} M_n(x;\delta,\eta)\frac{t^n}{n!}
=((1+\delta t)^2+t^2)^{-\eta/2}\exp\left(x\arctan
\left(\frac{t}{1+\delta t}\right)\right),
$$
thus we must set $f(t)=((1+\delta t)^2+t^2)^{-\eta/2}$ and
$g(t)=\arctan(t/(1+\delta t))$ in
(\ref{E_Sdef}). As indicated in table 4 of ch.\ III by Viennot (1983), this
implies $g^{\langle -1\rangle }(t)=\tan(t)/(1-\delta \tan t)$, and 
the inverse polynomials of $\{M_n(x;,\delta,\eta)\}_{n\geq 0}$ are given by 
\begin{equation}
\label{E_Minv}
\sum_{n\geq 0} Q_n(x) \frac{t^n}{n!}
=\left(\frac{\sec(t)^2}{(1-\delta\tan(t))^2}\right)^{\eta/2}\exp\left(x
\frac{\tan(t)}{1-\delta \tan(t)}\right).
\end{equation}
In particular, we have
\begin{equation}
\label{E_Minvc}
\mu_{n,d}=\frac{n!}{d!} [t^n] \sec(t)^{\eta}\tan(t)^d \quad\mbox{for
  $\delta=0$}.
\end{equation}

\section{Eigenvectors of closable tridiagonal operators}
\label{S_spectrum}

In this section we return to general tridiagonal operators on the
complex Hilbert space $\ell^2({\mathbb Z}_{\geq 0})$, but it will be
convenient to assume that they are {\em closable}.
This assumption was not necessary in section~\ref{S_3d}, where we were
only interested in calculating the matrix entries in the first column 
for the powers of tridiagonal operators. Formally, we are still able to state
theorem~\ref{T_spectrum} below without assuming closability. However, 
the theorem implies that the eigenvectors of tridiagonal operators of
the stated form must arise as infinite linear combinations of the basis 
vectors. In all applications, it seems easier to show that such vectors 
belong to the domain of the operator when the operator is closable.

As noted at the end of section 2.1 by Groenevelt \& Koelink (2002), 
for any (positive or negative) discrete series
representation $\pi$ of $su(1,1)$, the operators $\pi(B)$, $\pi(C)$ and
$\pi(H)$ are closable. As a consequence, the operators  
operators $T_c$ defined in (\ref{E_Tc}) are closable. They are even
self-adjoint, i.e., {\em Jacobi operators}. We refer the reader to
Koelink (2004) for the spectral theory of Jacobi operators. This theory was
used by Van Der Jeugt (1997), Koelink \& Van Der Jeugt (1998) and
Groenevelt \& Koelink (2002) to obtain a complete description of the spectra 
of the operators $T_c$. The main theorem of this section 
generalizes only the necessity part of their results. Even this
generalization suffices to highlight the fact: the (potential)
eigenvectors of any (closable) tridiagonal operator $T$ may be
expressed using the same OPS, whose moments and and inverse polynomial
coefficients may be used to express the first columns in the matrices of
powers of $T$.   
    
\begin{theorem}
\label{T_spectrum}
Let $T$ be a tridiagonal operator with $LDU$-decomposition $T=L+D+U$
such that $l_n u_n\neq 0$ holds for all $n\geq 0$ and let $z\in{\mathbb C}$
any complex number. If $z$ is an eigenvalue of $T$ then the associated
eigenspace is one dimensional, generated by 
$$
v_z:=\sum_{n\geq 0} \frac{p_{n}(z)}{\prod_{i=0}^{n-1} u_{i}} e_{n}.
$$
Here $\{p_n(x)\}_{n\geq 0}$ is the  sequence of monic polynomials defined by
(\ref{E_Favinit}) and (\ref{E_Favrec}), satisfying 
$b_n=d_n$ and $\lambda_n=u_{n-1}l_{n-1}$.
\end{theorem}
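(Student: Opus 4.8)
The plan is to suppose $v = \sum_{n \geq 0} c_n e_n$ is an eigenvector of $T$ with eigenvalue $z$, and to extract the recurrence that the coefficients $c_n$ must satisfy from the equation $Tv = zv$. Reading off the $e_n$-component of $Tv - zv = 0$ for each $n$ gives, using the $LDU$-decomposition, that $l_{n-1} c_{n-1} + (d_n - z) c_n + u_n c_{n+1} = 0$ for $n \geq 1$, together with the boundary relation $(d_0 - z) c_0 + u_1 c_1 = 0$ coming from the $e_0$-component. (One must be a little careful with the indexing of $u$ against the convention $U e_{n+1} = u_n e_n$ in \eqref{E_LDU}; the component equations are most safely derived directly from Definition~\ref{D_3diag}.) Since every $u_n \neq 0$, this is a genuine two-term recurrence: $c_0$ determines $c_1$, which together with $c_0$ determines $c_2$, and so on. Hence the eigenspace is at most one-dimensional, and we may normalize $c_0 = 1$.

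The second step is to identify the solution of this recurrence with the stated expression. I would set $c_n = p_n(z) / \prod_{i=0}^{n-1} u_i$ and verify that this choice satisfies the recurrence above. Substituting and clearing the product denominators, the claim reduces to checking that $p_n(z)$ satisfies $p_{n+1}(z) = (z - d_n) p_n(z) - u_{n-1} l_{n-1} p_{n-1}(z)$, which is exactly \eqref{E_Favrec} with $b_n = d_n$ and $\lambda_n = u_{n-1} l_{n-1}$, and that the base cases match: $p_0(z) = 1 = c_0$ forces the normalization, and $p_1(z) = z - b_0 = z - d_0$ gives $c_1 = (z - d_0)/u_0$, consistent with the $e_0$-boundary equation $(d_0 - z) c_0 + u_0 c_1 = 0$. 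So the coefficients of any eigenvector are forced to be precisely those of $v_z$, up to the overall scalar $c_0$; this establishes uniqueness and the explicit form simultaneously, and shows that \emph{if} $z$ is an eigenvalue then $v_z$ must (after scaling) lie in the domain and be the eigenvector.

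I expect the one subtlety — rather than a genuine obstacle — to be bookkeeping around the index shift in the $U$-operator convention and the $e_0$-boundary term, making sure the recurrence one writes down really is \eqref{E_Favrec} and not an off-by-one variant. The deeper point that the theorem deliberately sidesteps (as the surrounding text makes clear) is \emph{existence}: the argument only shows that eigenvectors, if they exist, have this shape, and that the formal series $v_z$ is the only candidate; whether $v_z$ actually converges in $\ell^2$ and lies in $D(T)$ for a given $z$ is a convergence/spectral-theory question that is not part of this statement. Closability, invoked at the start of the section, is what makes the phrase ``if $z$ is an eigenvalue'' meaningful and lets one pass from the formal recurrence to an honest vector, but no spectral analysis beyond the recurrence itself is needed for the proof of Theorem~\ref{T_spectrum} as stated.
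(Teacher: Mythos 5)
Your proposal is correct and follows essentially the same route as the paper: extract the component recurrence from $Tv=zv$, clear the products of the $u_i$ (which are nonzero) to reduce it to the Favard recurrence \eqref{E_Favrec} with $b_n=d_n$, $\lambda_n=u_{n-1}l_{n-1}$, and conclude that every eigenvector is a scalar multiple of $v_z$. The indexing point you flag resolves exactly as in your final check (the $e_0$-equation is $(d_0-z)c_0+u_0c_1=0$, per the convention $Ue_{n+1}=u_ne_n$ of \eqref{E_LDU}), and your remark that existence/convergence is deliberately outside the statement matches the paper's discussion.
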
 
\begin{proof}
A vector of the form $\sum_{n\geq 0} h_n\cdot e_{n}$
is eigenvector of $T$ associated to the eigenvalue $z$ only if 
\begin{equation}
\label{E_Einit}
d_0h_0+u_0h_1=zh_0\quad\mbox{and}
\end{equation}
\begin{equation}
\label{E_Erec}
l_{n-1} h_{n-1}+d_nh_n+u_{n} h_{n+1}=z h_n \quad\mbox{holds for $n\geq 1$}.
\end{equation}
Introducing $p_0:=h_0$ and 
$$
p_n:=h_{n}\cdot u_0u_1\cdots u_{n-1}\quad\mbox{for $n\geq 1$} 
$$
we may rewrite (\ref{E_Einit}) as 
\begin{equation}
\label{E_Epinit}
p_1=(z-d_0) p_0,
\end{equation}
while multiplying both sides of (\ref{E_Erec})
by $u_0u_1\cdots u_{n-1}$ yields the equivalent equation
\begin{equation}
\label{E_Eprec}
p_{n+1}=(z-d_n) p_{n}-u_{n-1}l_{n-1} p_{n-1}\quad\mbox{for $n\geq 1$}.
\end{equation}
If $p_0=0$ then (\ref{E_Epinit}) and (\ref{E_Eprec}) implies $p_n=0$ for
all $n$. We obtain a nonzero eigenvector only if $p_0\neq 0$ and then,
without loss of generality we may assume $p_0=1$. Furthermore,
(\ref{E_Eprec}) implies that $p_n$ must be obtained by substituting $z$
into a monic polynomial sequence $\{p_n(x)\}_{n\geq 0}$ satisfying
Favard's recursion formula with $b_n=d_n$ and $\lambda_n=u_{n-1}l_{n-1}$
for $n\geq 1$. Therefore any eigenvector associated to $z$ is of the
form stated in the theorem.
\end{proof} 
\begin{remark}
In the special case when the tridiagonal
operator $T$ is a Jacobi operator, $T$ is self-adjoint and thus has
only real eigenvalues. 
\end{remark}
The connecting coefficients $\{\lambda_n\}_{n\geq 1}$ resp.\
  $\{b_n\}_{n\geq 0}$ for the polynomials $\{p_n(x)\}_{n\geq 0}$ 
used in theorem~\ref{T_spectrum} above are the same as
  the ones used in corollary~\ref{C_Motzkin}
  and theorem~\ref{T_inverse}. Thus, the same OPS may be
  used to calculate the first column of matrix entries of the powers of $T$ 
as in the calculation of its eigenvectors. In particular, for any 
  operator $T_c$ defined in (\ref{E_Tc}) we must use the appropriate 
 OPS $\{p[k,c]_n(x)\}_{n\geq 0}$ from proposition~\ref{P_lmx}. This
  leads to the use of Laguerre polynomials, and Meixner polynomials of
  both kinds, as in the above cited results of  
Van Der Jeugt (1997), Koelink \& Van Der Jeugt (1998) and
Groenevelt \& Koelink (2002). 

We conclude this section with an observation regarding the case $c=0$
and $k=1/2$, leading to $p[1/2,0](x)=M_n(x;0,1)$, by proposition~\ref{P_lmx}.
A combinatorial model for the coefficients of the powers of $x$ in
$M_n(x;0,1)$ is mentioned in Example 21 of ch.\ II by Viennot (1983). Up to
sign, the coefficient of $x^j$ in $M_n(x;0,1)$ is the number of
permutations of $\{1,2,\ldots,n\}$ having $j$ odd cycles. Thus, for this
specific choice of parameters, not only the selected matrix entries of
the powers of $T_0$, but also the description of the eigenvectors 
has a combinatorial interpretation. Recall that the same selection of
parameters leads to the first part of (\ref{E_ETe}), which is equivalent 
to the first part of (\ref{E_ET}) that also caught the attention of
Hodges \& Sukumar (2007). Therefore, if someone wanted to build a
``purely combinatorial model'' of a quantum oscillator in the future,
starting with $\alpha=1$ in the model proposed Hodges \& Sukumar (2007)
or with $k=1/2$ in the model proposed by Klimyk (2006), and focusing on
understanding the action of the moment operator seems a very reasonable
first step.  

\section{Beyond self-adjoint operators and beyond $su(1,1)$}
\label{S_ex}

Up to this point, our applications of theorems~\ref{T_Motzkin},
\ref{T_inverse} and \ref{T_spectrum} involved self-adjoint operators
from discrete series representations of $su(1,1)$.
In this final section we outline two examples indicating that 
that these theorems may also be useful in other applications where the 
operator is not self-adjoint or where it belongs to the representation
of an algebra that is is not $su(1,1)$. 

Our first example is $\pi^{+}_{k}(B+C)$ where $\pi^{+}_k$ is a positive
discrete series representation of $su(1,1)$ for some $k>0$. 
In analogy 
to corollary~\ref{C_Motzkinsu}, the matrix
entry $\langle e_0 , \pi^+_k(B+C)^m e_0\rangle$ is the
$m$th moment of the functional associated to the OPS 
$\{f_n^{(2k)} (x)\}_{n\geq 0}$ given by (\ref{E_Favinit}) and
(\ref{E_Favrec}) where 
$$
b_n=0\quad\mbox{and}\quad \lambda_n=-n(n+2k-1).  
$$
It is easy to verify that we have 
\begin{equation}
\label{E_iL}
f_n^{(2k)}(x)=(-\imi)^{n} M_n(\imi x;0,2k)
\end{equation}
where the polynomials $M_n(x;0,2k)$ are Meixner polynomials 
of the second kind. Thus, for $k\in \frac{1}{2} {\mathbb Z}_{\geq 0}$,
the polynomials $f_n^{(2k)}(x)$  
are identical to the polynomials studied by Carlitz (1959). 
The applications of theorems~\ref{T_Motzkin} and~\ref{T_inverse} are no
different from the self-adjoint setting. By theorem~\ref{T_spectrum},
a complex number $z\in {\mathbb C}$ is an
eigenvalue $\pi^{+}_{k}(B+C)$ only if the expression
$$
v^{(k)}_z:=\sum_{n\geq 0} \frac{f_n^{(2k)}(z)}{\sqrt{(n-1)! (2k)_n}} e_n
$$
represents a vector belonging to $\ell^2(\mathbb{Z}_{\geq 0})$. The
the discussion of the convergence issues is analogous to the case 
of $\pi^+_k(B-C)$ that was worked out by Koelink \& Van Der Jeugt
(1998): here we obtain that all eigenvalues belong to $\imi{\mathbb R}$. 
This is not surprising, since $\imi\pi^{+}_k(B+C)$ is self-adjoint.
However, the way were able to handle the example indicates that the
validity of our main results is not restricted to self-adjoint operators. 

Our second example is the sum $A+A^{\dagger}$ where $A$ and
$A^{\dagger}$ are the operators defined in eq.\ (2.1) of Sukumar \&
Hodges (2007):
$$
\begin{array}{lclclcl}
A e_{2n}&=&\sqrt{2n} \cdot e_{2n-1},&\quad & A e_{2n+1}&=&\sqrt{2n+1+\alpha}
\cdot e_{2n},\\ 
A^{\dagger} e_{2n}&=&\sqrt{2n+1+\alpha} \cdot e_{2n+1}, &\quad &
A^{\dagger} e_{2n-1}&=&\sqrt{2n} \cdot e_{2n}.\\
\end{array}
$$
The operator $A+A^{\dagger}$ is self-adjoint, but it belongs to a
representation of a Lie superalgebra properly containing $su(1,1)$. 
Theorems~\ref{T_Motzkin}, \ref{T_inverse} and
\ref{T_spectrum} are obviously applicable, and the OPS
$\{p_n(x)\}_{n\geq 0}$ appearing in all of them is given by 
(\ref{E_Favinit}) and (\ref{E_Favrec}) where, 
$$
b_n=0, \quad \lambda_{2n+1}=2n+1+\alpha,\quad\mbox{and}\quad
\lambda_{2n}=2n. 
$$
We obtain the {\em generalized Hermite polynomials}, see
eq.\ (2.46) of ch.\ V in Chihara (1978). Choosing $\alpha=0$ yields the
usual Hermite polynomials. Sukumar \& Hodges (2007) note that this choice
corresponds to the standard harmonic oscillator. For nonzero 
$\alpha$, the parity-dependent nature of the quantum algebra defined by
Sukumar \& Hodges (2007) is reflected in the parity-dependent 
nature of the recursion defining the generalized Hermite polynomials.

\section*{Acknowledgements} 
I wish to thank Mireille Bousquet-M\'elou for acquainting me with
Viennot's (1983) work and two anonymous referees whose suggestions
greatly improved the content and presentation of this paper. Many thanks
to Erik Koelink and Alan Lambert who promptly answered many of my questions. 
This work was supported by the NSA grant
\# H98230-07-1-0073 and essentially completed while the author was on
reassignment of duties sponsored by the University of North Carolina at
Charlotte.

\end{document}